 \newtheorem{theorem}{Theorem}
 \newtheorem{lemma}[theorem]{Lemma}
 \newtheorem{proposition}[theorem]{Proposition}
 \newtheorem{corollary}[theorem]{Corollary}
 \theoremstyle{definition}
 \newtheorem{remark}[theorem]{\bf Remark}
 \newtheorem{definition}[theorem]{Definition}
 \newcommand{\Z}{\mathbb Z}
 \newcommand{\R}{\mathbb R}
 \newcommand{\mS}{\mathbb S}
 \newcommand{\cU}{\mathcal U}
 \newcommand{\cL}{\mathcal L}
 \newcommand{\cM}{\mathcal M}
 \newcommand{\ga}{\gamma}
 \newcommand{\Ga}{\Gamma}
 \newcommand{\de}{\delta}
 \newcommand{\lV}{\left\Vert}
 \newcommand{\rV}{\right\Vert}
 \def \a{\alpha}
 \def \ga{\gamma}
 \def \Ga{\Gamma}
 \def \de{\delta}
 \def \e{\varepsilon}
 \def \la{\lambda}
 \def \vr{\varphi}
 \def \re{{\mathbb R}}
 \def \Z {{\mathbb Z}}
 \def \lv{\left\vert}
 \def \rv{\right\vert}
 \def \lV{\left\Vert}
 \def \rV{\right\Vert}
 \def\cS{{\mathcal S}}
 \def\cA{{\mathcal A}}
 \def\fA{{\textgoth A}}
 \def\AA{{\mathbb A}}
 \def\cU{{\mathcal U}}
 \def\fu{{\mathfrak u}}
 \def\cV{{\mathcal V}}
 \def\fv{{\mathfrak v}}
\begin{document}
 \title[Weak solutions]{Weak solutions of the Hamilton-Jacobi equation
 for Time Periodic Lagrangians} 
 \author[G. Contreras]{Gonzalo Contreras}
 \address{ CIMAT, A.P. 402, 3600, Guanajuato. Gto, M\'exico}
 \email{gonzalo@cimat.mx}
 \author[R. Iturriaga]{Renato Iturriaga}
 \address{ CIMAT, A.P. 402, 3600, Guanajuato. Gto, M\'exico}
 \email{renato@cimat.mx}
 \author[H. S\'anchez-Morgado]{H\'ector S\'anchez-Morgado}
 \address{ Instituto de Matem\'aticas, UNAM. Ciudad Universitaria C. P. 
          04510, Cd. de M\'exico, M\'exico.}
 \email{hector@math.unam.mx}
 \thanks{All authors were partially
 supported by  CONACYT-M\'exico grant $\#$ 28489-E}
 
 \thanks{{\bf Keywords:} Hamilton-Jacobi equation, weak KAM theory,
 action potential, periodic Lagrangians, Aubry-Mather theory,
 Ma\~n\'e's critical level, lagrangian graphs.}
 \thanks{{\bf AMS 2000 Math. Subject Classification:} 70H20, 37J50,
 70H08, 49L99, 37J05.}
 
 \begin{abstract}
 In this work we prove the existence of Fathi's weak KAM solutions for
 periodic Lagrangians and give a construction of all of them.
 \end{abstract}
 
 \maketitle
 \section{Introduction and statement of results}
 
 Let $M$ be a closed connected manifold, $TM$ its tangent bundle. Let
 $L:TM\times \R\to \R$ be a $C^{\infty}$ Lagrangian. We will assume for
 the Lagrangian the hypothesis of Mather's seminal paper~\cite{M}. The
 Lagrangian $L$ should be:
 
 \begin{enumerate}
 \item {\it Convex}. The Lagrangian $L$ restricted to $T_xM$, in linear 
 coordinates should have positive definite Hessian.
 \item {\it Superlinear}. For some Riemannian metric we have 
 $$
 \lim_{|v|\to \infty}\frac{L(x,v,t)}{|v|}=\infty,
 $$
 uniformly on $x$ and $t$.
 \item {\it Periodic}. The Lagrangian should be periodic in time, i.e.
 $$L(x,v,t+1)=L(x,v,t),$$
 for all $x,v,t$.
 \item {\it Complete}. The Euler Lagrange flow associated to the
   Lagrangian should be complete.
 \end{enumerate}

 Let ${\cM}(L)$ be the set of probabilities on the Borel
 $\sigma$-algebra of $TM\times\mS^1$
 that have compact support and are invariant under the Euler-Lagrange
 flow $\phi_{t}$.

 The {\it action} of $\mu\in {\cM}(L)$ is defined by
 \[A_{L}(\mu)=\int L\,d\mu.\]
 
 Mather defined the function $\alpha:H^1(M,\R)\to \R$ as
 \begin{equation}
 \alpha([\omega])=-\min\left\{\int (L-\omega)\,d\mu :\;
 \mu\in{\cM}(L)\right\}.    \label{coho}
 \end{equation}
 
 For any $k$ in $\R$ define the $(L+k)$-action of an absolutely 
 continuous curve 
 $\ga:[a,b]\rightarrow M$ as
 $$A_{L+k}(\ga)=\int_{a}^{b}(L+k)(\ga(\tau),\dot\ga(\tau),\tau) d\tau $$
 
 For $t$ in $\re$ we denote by $[t]$ the corresponding point in ${\mS}^1$.
 For any pair of points $(x,[s]), (y,[t])$ on $M\times{\mS}^1$ and $n$ a
 non negative integer, define
 $\mathcal{C}((x,[s]),(y,[t]);n)$ as 
 the set of absolutely continuous curves
 $\ga:[a,b]\to M$ with $\ga(a)=x$ and $\ga(b)=y$ such that $[a]= [s]$ and
 $[b]= [t]$, and the integer part of $b-a$ is $n$.

 Let $\Phi^n_k$ be the real function defined on 
 $M\times{\mS}^1\times M\times{\mS}^1$ as 
 
 $$\Phi^n_k((x,[s]),(y,[t]))=
 \min_{\ga \in\mathcal{C}((x,[s]),(y,[t]);n)}
 \{A_{L+k}(\ga)\}.$$
 so that  $\Phi^n_k=\Phi^n_0+kn$.
 
 Then the {\it action functional} is defined by
 $$\Phi_k=\inf_n\Phi^n_k,$$
 and the {\it Extended Peierls barrier} by
 $$h_k=\liminf_{n\to \infty}\Phi^n_k.$$
 Thus $\Phi_k\le h_k$.

 \bigskip
 A curve $\ga:[a,b]\to M$ will be called {\it closed } if
 $\ga(a)=\ga(b)$ and $b-a$ is an integer.
 In analogy to the autonomous case~\cite{Ma}, \cite{CDI},
 there is a critical value $c(L)$ given by the following proposition:

 \begin{proposition}\quad
 
 \begin{enumerate}
 \item If $k<c(L)$, then $\Phi_{k}((x,[s]),(y,[t]))=-\infty$, 
 for all $(x,[s]), (y,[t])$ on $M\times{\mS}^1$
 \item 
 $$c(L)= \min \{k: \int_{\ga}L+k \ge 0 {\mbox{ for all closed curves
     $\ga $}}\}$$
 \item If $k\geq c(L)$, then $\Phi_{k}((x,[s]),(y,[t]))>-\infty$
   for all $(x,[s]), (y,[t])$ on $M\times{\mS}^1$.

\item In terms of Mather's $\alpha$ function we have 
 
 \begin{eqnarray}\label{erg.charac}
     c(L)& = &  
-\min \Bigl\{\int Ld\mu :\mu{\mbox { is an invariant probability}}\Bigr\}\\
     & =  &\alpha (0)
   \end{eqnarray}
 
   Invariant probabilities realizing the infimum above are called
   {\it minimizing measures}.
 \end{enumerate}               \label{basic}
 \end{proposition}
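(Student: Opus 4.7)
My plan is to define $c(L)$ through the characterization in~(2), deduce (1) and (3) directly, and then identify $c(L)$ with $\alpha(0)$ and with $c^*:=-\min_{\mu\in\cM(L)}\int L\,d\mu$. Put $B:=\{k\in\re:\int_\gamma(L+k)\ge 0\text{ for every closed curve }\gamma\}$. This set is nonempty (superlinearity gives $L\ge -C$, so $[C,\infty)\subset B$), upward-closed (a closed curve $\gamma$ has integer elapsed time $N$, so $\int_\gamma(L+k')-\int_\gamma(L+k)=(k'-k)N\ge 0$ when $k'\ge k$), and closed under taking limits of $k$; hence $c(L):=\min B$ exists, establishing~(2). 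For~(1), if $k<c(L)$ choose a closed $\gamma_0:[0,N]\to M$ with $\int_{\gamma_0}(L+k)=-\delta<0$, fix auxiliary curves $\alpha,\beta$ joining $(x,[s])$ to $(\gamma_0(0),[0])$ and $(\gamma_0(0),[0])$ to $(y,[t])$, and form $\alpha*\gamma_0^m*\beta$: this lies in some $\mathcal{C}((x,[s]),(y,[t]);n_m)$ and has $(L+k)$-action $A_{L+k}(\alpha)-m\delta+A_{L+k}(\beta)\to -\infty$. For~(3), if $k\ge c(L)$ fix any single curve $\eta$ from $(y,[t])$ to $(x,[s])$; the elapsed times of any admissible $\gamma$ and of $\eta$ are $\equiv t-s$ and $\equiv s-t\pmod 1$, so $\gamma*\eta$ is closed of integer length, hence $A_{L+k}(\gamma)+A_{L+k}(\eta)\ge 0$ and $\Phi_k((x,[s]),(y,[t]))\ge -A_{L+k}(\eta)>-\infty$.

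For~(4), the equality $c(L)=\alpha(0)$ is immediate from~(\ref{coho}) with $[\omega]=0$, so it suffices to prove $c(L)=c^*$. For $c(L)\ge c^*$, fix an ergodic $\mu\in\cM(L)$ with $\int L\,d\mu=-c^*$ and a $\mu$-generic $\omega_0=(x_0,v_0,[s_0])$. Birkhoff gives $\frac{1}{N}\int_0^N L(\phi_\tau\omega_0)\,d\tau\to -c^*$, and Poincar\'e recurrence for the $\mu$-preserving map $\phi_1$ yields integers $N_j\to\infty$ with $\phi_{N_j}\omega_0\to\omega_0$. Close the orbit segment $\pi\circ\phi_{[\cdot]}\omega_0:[0,N_j]\to M$ by a short curve of unit elapsed time joining $\pi\phi_{N_j}\omega_0$ back to $x_0$; superlinearity and completeness make its action uniformly bounded in $j$, so the resulting closed curves $\tilde\gamma_j$ of integer length $N_j+1$ satisfy $\frac{1}{N_j+1}A_L(\tilde\gamma_j)\to -c^*$. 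Since $c(L)\in B$ we have $A_L(\tilde\gamma_j)+c(L)(N_j+1)\ge 0$, and the limit yields $c(L)\ge c^*$.

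For the reverse inequality $c(L)\le c^*$, each closed curve $\gamma$ of integer length $N$ defines a holonomic probability $\mu_\gamma:=\frac{1}{N}\int_0^N\delta_{(\gamma(\tau),\dot\gamma(\tau),[\tau])}\,d\tau$; by Mather's result that the infima of $\int L\,d\mu$ over holonomic probabilities and over $\cM(L)$ coincide (see~\cite{M}), $\int L\,d\mu_\gamma\ge -c^*$, i.e.\ $\int_\gamma(L+c^*)\ge 0$. Hence $c^*\in B$ and $c^*\ge c(L)$. The main obstacle is the closing-lemma step in $c(L)\ge c^*$: extracting from a generic orbit of $\mu$ closed curves of integer length whose normalized actions converge to $\int L\,d\mu$, with the correction uniformly bounded. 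The remaining ingredients reduce to the set-theoretic structure of $B$ together with the Mather-Ma\~n\'e identification between the two classes of minimizing measures.
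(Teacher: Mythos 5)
The paper gives no proof of Proposition~\ref{basic}; it declares it standard (following the autonomous arguments of \cite{Ma}, \cite{CDI}), so your attempt should be measured against that standard route. Your treatment of items (1)--(3) is correct and is essentially that route: defining $c(L)$ as the minimum of the set $B$ of admissible $k$ (nonempty, upward closed, closed, bounded below by testing on a constant loop of period one), pumping a negative closed loop to get (1), and closing an arbitrary competitor with one fixed return curve to get (3). Two harmless imprecisions: the negative loop need not be based at time $[0]$, and the concatenations require integer time shifts, which cost nothing by periodicity. The inequality $c(L)\ge\alpha(0)$ via Birkhoff plus Poincar\'e recurrence of the time-one map for an ergodic minimizing measure, closing the orbit segment by a unit-time curve of uniformly bounded action, is exactly the classical closing argument (the uniform bound comes from continuity of $L$ on a compact set, not really from superlinearity or completeness).

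The genuine weak point is the reverse inequality $c(L)\le\alpha(0)$. You reduce it to the assertion that the infimum of $\int L\,d\mu$ over holonomic (closed) probabilities on $TM\times\mS^1$ coincides with the minimum over $\cM(L)$, and you attribute this to Mather~\cite{M}. That coincidence theorem is not in \cite{M}; it is a Ma\~n\'e-type result (\cite{Ma1} in the autonomous case), and its time-periodic version does not follow by simply regarding $L$ as an autonomous Lagrangian on $M\times\mS^1$, because there is no convexity/superlinearity in the $\mS^1$-direction --- the same obstruction the paper itself points out for the Hamiltonian $\mathbb H$. As written, this step is a black box whose proof is of the same order of difficulty as item (4). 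The standard self-contained fix, and the one implicit in \cite{Ma}, \cite{CDI}, uses only tools already in the paper: if some closed curve $\ga$ of integer period $N\ge1$ had $\int_\ga (L+\alpha(0))=-\de<0$, take Tonelli minimizers $\sigma_m$ with the same endpoints over time $mN$; then $A_L(\sigma_m)\le m\,A_L(\ga)$, Lemma~\ref{le-mather} bounds $\lV\dot\sigma_m\rV$ uniformly, and the evenly distributed measures along $(\sigma_m,\dot\sigma_m,[\,\cdot\,])$ have a weak$^*$ limit point which is invariant (boundary effects vanish as $mN\to\infty$) and has action at most $-\alpha(0)-\de/N$, contradicting the definition of $\alpha(0)$ in~(\ref{coho}). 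With that substitution, or with a correct citation and proof of the periodic closed-measure theorem, your argument is complete.
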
   
   \bigskip
 
  From now on, set  $c=c(L)$.
 
  In contrast with the autonomous case, the action potential~$\Phi_c$
  may fail to be continuous and to satisfy the triangle inequality.
  However, for the extended Peierls barrier we shall prove the following:

 \begin{proposition}\label{barrera}\quad
     \begin{enumerate}
   \item If $k<c$, $h_k\equiv -\infty$.
   \item If $k>c$, $h_k\equiv \infty$.
   \item $h_c$ is finite.
   \item $h_c((x,[s]),(z,[\tau ]))\le
     h_c((x,[s]),(y,[t]))+\Phi_c((y,[t]),(z,[\tau ]))$.
   \item $h_c$ is Lipschitz.  
   \end{enumerate}
 \end{proposition}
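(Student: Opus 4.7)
Parts (1) and (2) parallel the dichotomy of Proposition~\ref{basic}. For (1), $k<c$ together with Proposition~\ref{basic}(2) yields a closed curve $\eta$ of integer period $p$ with $A_{L+k}(\eta)=-\alpha<0$. Inserting $m$ copies of $\eta$ into a fixed finite-action path from $(x,[s])$ to $(y,[t])$, with short bridges of length $\le 1$ to reconcile basepoints, produces curves in $\mathcal{C}((x,[s]),(y,[t]);N_m)$ with $N_m\sim mp\to\infty$ and action tending to $-\infty$; hence $\Phi^{N_m}_k\to-\infty$ and $h_k\equiv-\infty$. For (2), pick $k'\in(c,k)$ and write $\Phi^n_k=\Phi^n_{k'}+(k-k')n$; Proposition~\ref{basic}(3) at level $k'$ gives $\Phi^n_{k'}\ge\Phi_{k'}((x,[s]),(y,[t]))>-\infty$ uniformly in $n$, so $\Phi^n_k\to+\infty$ and $h_k\equiv+\infty$.

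The main obstacle is the upper bound in Part (3); the lower bound is immediate from $h_c\ge\Phi_c>-\infty$. For the upper bound I would fix an ergodic minimizing probability $\mu$ (obtained from Proposition~\ref{basic}(4) and ergodic decomposition) and a generic $(x_0,v_0,[t_0])$ in its support. Combining Poincar\'e recurrence with ergodic-sum control---e.g.\ Atkinson's theorem applied to the zero-mean cocycle $(L+c)\circ\phi_\tau$---I extract integer times $N_j\to\infty$ along which simultaneously $\phi_{N_j}(x_0,v_0,[t_0])$ lies within $\varepsilon$ of $(x_0,v_0,[t_0])$ and the $(L+c)$-action of the orbit segment $\gamma_j:[0,N_j]\to M$ stays below a fixed constant. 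Concatenating bounded-action paths from $(x,[s])$ to $\gamma_j(0)$ and from $\gamma_j(N_j)$ to $(y,[t])$ then produces curves in $\mathcal{C}((x,[s]),(y,[t]);N_j')$ with $N_j'\to\infty$ and uniformly bounded $(L+c)$-action, proving $h_c<\infty$.

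Part (4) is concatenation: joining curves in $\mathcal{C}((x,[s]),(y,[t]);n_1)$ and $\mathcal{C}((y,[t]),(z,[\tau]);n_2)$ yields a curve in $\mathcal{C}((x,[s]),(z,[\tau]);n_1+n_2+\delta)$, with $\delta\in\{0,1\}$ determined only by the fractional parts of $t-s$ and $\tau-t$; taking $\liminf_{n_1}$ and then $\inf_{n_2}$ gives the inequality. Part (5) follows from (4) and its variant with $\Phi_c$ on the first slot (valid since $\Phi_c\le h_c$ is finite at level $c$): these together bound $|h_c(P_1)-h_c(P_2)|$ by $\Phi_c$ evaluated between points that differ in only one slot, and $\Phi_c$ is Lipschitz on nearby pairs---via a direct connecting curve of time length $<1$, whose action is controlled by the smoothness of $L$ on bounded velocity sets---whence $h_c$ is Lipschitz.
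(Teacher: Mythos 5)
Your items (1)--(4) follow standard lines (the paper itself omits them as routine), and your sketch of (3) via a minimizing measure, recurrence and Atkinson-type control of the zero-mean action cocycle is the usual argument. The genuine gap is in item (5), which is precisely the one item the paper does prove. Your final step asserts that $\Phi_c$ is Lipschitz on nearby pairs ``via a direct connecting curve of time length $<1$'', and this is false for time-periodic Lagrangians: any admissible curve joining $(y,[t])$ to $(y',[t'])$ must have time length congruent to $t'-t$ modulo $1$. If $[t']=[t]$ and $y'\neq y$, the shortest admissible duration is $1$, so no short direct curve exists and $\Phi_c((y,[t]),(y',[t']))$ does not tend to $0$ as $(y',[t'])\to(y,[t])$; indeed it stays bounded below by (roughly) $\Phi_c((y,[t]),(y,[t]))$, which is strictly positive off the Aubry set. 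If the time gap is small but nonzero while $d(y,y')$ is of comparable or larger size, the direct curve has velocity of order $d(y,y')$ divided by the time gap and its action is not $O\bigl(d(y,y')+|t'-t|\bigr)$ for a superlinear $L$. This is exactly the phenomenon the paper warns about (``the action potential is dominated but it is not continuous at $\bigl((x,s),(x,s)\bigr)$ when $(x,s)$ is not in the Aubry set''), so the reduction of (5) to (4) plus local Lipschitzness of $\Phi_c$ collapses.

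The paper's route avoids this by never estimating $\Phi_c$ between nearby points: it shows (Lemma~\ref{hiswk}) that $u(\cdot)=h_c((z,[\sigma]),\cdot)$ is a backward weak KAM solution and $v(\cdot)=-h_c(\cdot,(z,[\sigma]))$ a forward one, using Mather's a priori bound (Lemma~\ref{le-mather}) to extract calibrating curves, and then proves (Lemma~\ref{wk-lip}) that every weak KAM solution is Lipschitz with a constant independent of the solution. The key point there is that a weak KAM solution has a realizing curve extending over a time interval of length $\delta>1$, so one can perform a first variation over a time window of length at least $1$ with uniformly bounded velocities, which yields the bound $u(y,[s_0])-u(x,[t_0])\le K\bigl(|s_0-t_0|+d(x,y)\bigr)$; applying this to both partial functions of $h_c$ with the uniform constant gives joint Lipschitz continuity. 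If you want to salvage your scheme you would need an estimate of $h_c$ (not $\Phi_c$) between nearby points of the form $h_c(Q,Q')\le K\,d(Q,Q')$, but that is essentially the statement being proved, so some substitute for the calibrated-curve argument is unavoidable.
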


 Let $H(x,p,t)$ be the Hamiltonian associated to the Lagrangian;
 $$H:T^*M\times\re\to \R$$
 \begin{equation}
   \label{eq:hamiltonian}
   H(x,p,t)=\max_{v\in T_xM}pv-L(x,v,t)
 \end{equation}
 
 In~\cite{gafa} the critical  value
 or $\alpha (0)$ for the autonomous case is characterized as 
 
 \begin{align*}
 c(L) &=\inf_{f\in C^{\infty}(M,\R)}\sup_{x\in M}H(x,d_{x}f)\\
      &=\inf\{k\in\R:\;\mbox{\rm there exists $f\in C^{\infty}(M,\R)$
 such that $H(df)<k$}\},
 \end{align*}

 This can be restated in physical terms, by saying that $c(L)$
 is either the infimum of the values of $k\in\R$ for which
 there is an exact Lagrangian graph with energy less than $k$, 
 or the infimum of the values
 of $k\in\R$ for which there exist smooth solutions of the
 Hamilton-Jacobi inequality $H(df)<k$.
 
 The second interpretation has a natural  generalization. We will prove
 in section~\ref{Hamilton Jacobi} the following 
 
 \begin{theorem}
 \label{gafaperiodico}
 The critical value, $c(L)$ or $\alpha (0)$  is characterized as the
 infimum of $k$ such that there exists a subsolution
 $f:M\times{\mS}^1\to \R$ of the Hamilton Jacobi equation,
 $$
 d_tf+H(x,d_xf,t)\le k.
 $$
 \end{theorem}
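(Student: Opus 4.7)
The plan is to establish both inequalities between $c$ and the infimum
$c_* := \inf\bigl\{k\in\R : \text{a subsolution at level } k \text{ exists}\bigr\}$, using the Fenchel inequality for one direction and the extended Peierls barrier $h_c$ as an explicit subsolution for the other.

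For $c_* \ge c$, I invoke the Fenchel inequality $p\cdot v \le L(x,v,t)+H(x,p,t)$. Assume $f:M\times\mS^1\to\R$ is Lipschitz and satisfies $d_tf + H(x,d_xf,t)\le k$ almost everywhere. For any absolutely continuous closed curve $\ga:[a,b]\to M$ with $b-a\in\N$, the chain rule (valid because $f$ is Lipschitz and $\ga$ absolutely continuous) gives
$$0 = f(\ga(b),[b]) - f(\ga(a),[a]) = \int_a^b\bigl(d_tf + d_xf\cdot\dot\ga\bigr)\,d\tau \le \int_a^b (L+k)\,d\tau,$$
so $\int_\ga(L+k)\ge 0$ for every closed curve. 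Proposition~\ref{basic}(2) then forces $k\ge c$.

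For $c_* \le c$, I exhibit a subsolution at level $c$. Fix a base point $(x_0,[s_0])$ and set
$$f(x,[s]) := h_c\bigl((x_0,[s_0]),(x,[s])\bigr),$$
which is finite and Lipschitz by Proposition~\ref{barrera}(3),(5). Combining the triangle inequality in Proposition~\ref{barrera}(4) with the obvious $\Phi_c \le \Phi_c^n \le A_{L+c}(\ga)$ yields the domination
$$f(z,[\tau]) - f(y,[t]) \;\le\; A_{L+c}(\ga)$$
for every a.c.\ curve $\ga$ from $(y,[t])$ to $(z,[\tau])$. At a point $(x,[s])$ where $f$ is totally differentiable (a.e., by Rademacher's theorem), apply the domination to the short curve $\ga(\sigma)=\exp_x(\sigma v)$, $\sigma\in[0,\e]$ with $\e<1$; dividing by $\e$ and letting $\e\to 0^+$ gives
$$d_tf(x,[s]) + d_xf(x,[s])\cdot v \;\le\; L(x,v,s) + c \qquad \forall v\in T_xM.$$
Taking the supremum over $v$ and using that $H(x,\cdot,s)$ is the Fenchel conjugate of $L(x,\cdot,s)$ yields $d_tf + H(x,d_xf,s) \le c$ a.e., so $f$ is the desired subsolution at level $c$, giving $c_* \le c$.

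The main obstacle is the upper bound: one must produce a subsolution at the critical level itself, and the argument hinges on having a Lipschitz candidate with an infinitesimal version of the super-additivity inequality, which is precisely what Proposition~\ref{barrera}(5) together with (4) provides. If the statement is to be read as requiring $C^\infty$ subsolutions (as in the autonomous characterization recalled from \cite{gafa}), one mollifies $f$ by convolution and uses convexity of $H$ in $p$ together with local uniform continuity of $H$ in $(x,t)$ on the compact $p$-set $\{\|p\|\le \mathrm{Lip}(f)\}$ to obtain a smooth subsolution at level $c+\eta$ for every $\eta>0$, still yielding $c_*\le c$ in the limit.
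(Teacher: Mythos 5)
Your overall strategy is sound and the two halves line up with what the paper actually does, but your upper bound takes a genuinely different route. The lower bound (integrating the Fenchel inequality along closed curves and invoking item 2 of proposition~\ref{basic}) is exactly the content of lemma~\ref{menor}, which the paper takes from~\cite{gafa}. For the upper bound, however, the paper uses proposition~\ref{mayor}: for every $k>c(L)$ it produces a \emph{smooth strict} subsolution, so the infimum over smooth subsolutions is $c(L)$, with no claim that the critical level itself is attained. You instead build a subsolution at the critical level directly, taking $f=h_c((x_0,[s_0]),\cdot)$, which is finite and Lipschitz by items 3 and 5 of proposition~\ref{barrera}, dominated by $L+c$ via item 4, and then you rerun the infinitesimal argument of lemma~\ref{dif} at points of differentiability. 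This is legitimate and uses only material stated before the theorem (no circularity), and it buys something the paper's route does not make explicit at this point: a Lipschitz almost-everywhere subsolution exists at $k=c$ itself (in the paper this is only implicit later, via lemma~\ref{hiswk} and theorem~\ref{propiedades}). Conversely, the paper's route stays in the smooth category throughout, which is why it needs the strict inequality and $k>c$.

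One step needs repair. In your lower bound you allow $f$ merely Lipschitz with the inequality holding almost everywhere, and you assert that the chain rule along an arbitrary absolutely continuous closed curve gives $\frac{d}{d\tau}f(\ga(\tau),[\tau])=d_tf+d_xf\cdot\dot\ga$ with the bound $L+k$. This is not justified: the curve may spend a set of times of positive measure inside the Lebesgue-null set where $f$ fails to be differentiable (or where the pointwise inequality fails), so neither the derivative formula nor the bound is available along that particular curve. The fix is standard and is already in your last paragraph, just aimed at the wrong half: either restrict the lower bound to $C^1$ subsolutions, as lemma~\ref{menor} does (then your chain-rule computation is correct as written), or first mollify the Lipschitz subsolution, using convexity of $H$ in $p$ and uniform continuity of $H$ on $\{\lV p\rV\le \mathrm{Lip}(f)\}$, to get a smooth subsolution at level $k+\eta$, integrate along closed curves for the smooth function, and let $\eta\to0$ to conclude $k\ge c(L)$. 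With either reading the two halves match consistently (Lipschitz a.e.\ subsolutions on both sides, or smooth subsolutions on both sides after mollifying your $h_c$-based candidate), and the proof closes.
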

  
  We can recover the previous interpretation by using
  the autonomous Hamiltonian ${\mathbb H}(x,p,t,e)=H(x,p,t)+e$
  on $T^*(M\times{\mathbb S}^1)$. Then $df=(d_xf,d_tf)$ is an exact
  Lagrangian graph and $c(L)=\inf_u\sup_{(x,t)}{\mathbb H(d_{(x,t)}u)}$.
  The results in~\cite{gafa} can not be directly applied to this case
  because the Hamiltonian ${\mathbb H}$ does not come from a
  Lagrangian.

  The other values of Mather's alpha  function can be
  similarly characterized by recalling that $\a([\omega])=c(L-\omega)$
  and that the Hamiltonian of $L-\omega$ is
  $(x,p,t)\mapsto H(x,p+\omega,t)$.
 
  In corollary~\ref{Solk=c} we observe that differentiable solutions
  may only exist when $k=c(L)$. 
  
   \bigskip
   \begin{definition}
 Following Fathi~\cite{Fa1} we say that $u:M\times{\mS}^1\to \R$ is
 a forward weak KAM solution if
 \begin{enumerate}
 \item $u$ is $L+c$ dominated, i.e.
 $$
 u(y,[t])-u(x,[s])\le \Phi_c((x,[s]),(y,[t])).
 $$
 We use the notation $u\prec L+c$.
 \item For every $(x,[s])\in M\times\re$ there exists a curve
   $\ga:(s,\infty)\to M$ such that 
 $u(\ga(t),[t])-u(x,[s])= A_{L+c}(\ga|_{[s,t]})$,
 in that case we say that $\ga$ {\it realizes} $u$.
 
 \end{enumerate}
 
 Similarly  $u:M\times{\mS}^1\to \R$ is
 a backward weak KAM solution if it is dominated and 
 for every $(x,[s])\in M\times{\mS}^1$ there exists a curve
 $\ga:(-\infty,s)\to M$ such that 
 $u(x,[s])-u(\ga(t),[t])=A_{L+c}(\ga|_{[t,s]}))$
 
 Let $\cS^-$ (resp. $\cS^+$) be the set of {\it backward} (resp. {\it
 forward}) weak KAM solutions. 
 
 A point 
 $(x,v,[s])\in TM\times{\mS}^1$ is a positive (resp. negative)
 {\it semistatic}  point if the solution 
 $\ga=\ga_{(x,v,s)}$ of the Euler-Lagrange equation with
 initial conditions
 $(x,v,[s])$, satisfies for all $t$ 
 \[A_{L+c}(\ga|_{[s,t]})=\Phi_c((x,[s]),(\ga(t),[t]));\]
 (resp. $A_{L+c}(\ga|_{[t,s]})=\Phi_c((\ga(t),[t]),(x,[s]))$
 for all $t$).
 
 A point 
 $(x,v,[s])\in TM\times{\mS}^1$ is a {\it static} point if it is 
 positive semistatic and
 \[A_{L+c}(\ga|_{[s,t]})=-\Phi_c((\ga(t),[t]),(x,[s])).\]
 It turns out that if a point is static then its whole
 orbit under the Euler-Lagrange flow is static.
 
 We denote by $\Sigma^+$ the set of positive semistatic points.
 
 For a forward weak KAM solution $u$ we define its
 {\it forward basin} as
 \begin{multline*}
 \Ga_0^+(u)=\{(x,v,[s])\in\Sigma^+: \\
  u(\ga_{(x,v,s)}(t),[t])-u(x,[s])=
 \Phi_c((x,[s]),(\ga_{(x,v,s)}(t),[t]))\, \forall t> s\};
 \end{multline*}
 and define its {\it cut locus} by
 $\pi(\Ga_0^+(u)\setminus\Ga^+(u)\big)$, where
 $\pi:TM\times\mS^1\to M\times\mS^1$ is the proyection,
 \[\Ga^+(u)=\bigcup_{t>0}\phi_t\bigl(\Ga_0^+(u)\bigr),\]
 and $\phi_t$ is the Euler-Lagrange flow.
 It is easy to see that the sets $\Sigma^+$ and $\Ga_0^+(u)$ are 
 positively invariant and so $\Ga^+(u)\subset\Ga_0^+(u)$.
 Similarly, define the backward basins $\Ga^-_0(u)$, 
 $\Ga^-(u)$ for $u\in\cS^-$.
        \end{definition}

 The relevance of weak KAM solutions is that they have several properties, 
 including those given by the following theorem.
 
 \begin{theorem}\label{propiedades}
 
 If $u:M\times\mS^1\to\R$ is a weak KAM solution then
 
 \begin{enumerate}
 \item $u$ is Lipschitz and satisfies the Hamilton Jacobi equation
     \[H(x,d_xu,t)+d_tu=c\]
 at any point of differentiability. Moreover, $d_xu$ and $\dot\ga$
 are Legendre conjugate.
 \item Graph property: $\pi:\Ga^+(u)\to M\times\mS^1$ is injective and
   its inverse is Lipschitz.
 \item $u$ is differentiable on $\pi(\Ga^+(u))$.
 
 \end{enumerate}
 \end{theorem}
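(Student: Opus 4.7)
The plan is to prove (1), then (3), then deduce (2) from (3) and (1); assume $u\in\cS^+$, the backward case being analogous. These are the standard weak KAM results of Fathi adapted to the time-periodic setting via the suspension $M\times\mS^1$.

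For (1), the Lipschitz bound is a direct consequence of domination: join $(x,[s])$ and $(y,[t])$ by a short smooth curve in $M\times\mS^1$ of bounded velocity, so that $\Phi_c((x,[s]),(y,[t]))\le C\,d((x,[s]),(y,[t]))$, and swap the roles to get the other inequality. At a point $(x,[s])$ of differentiability and any $v\in T_xM$, apply domination on the short curve $\sigma(\tau)=\exp_x((\tau-s)v)$ and let the length shrink to zero; this yields $d_tu+d_xu\cdot v\le L(x,v,s)+c$, whence $d_tu+H(x,d_xu,s)\le c$. Differentiating the realization identity at $t=s^+$ gives equality at $v=\dot\ga(s)$, which simultaneously proves the HJ equation and the Legendre conjugacy of $d_xu$ with $\dot\ga(s)$.

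For (3), pick $(x,[s])\in\pi(\Ga^+(u))$, so $(x,v,[s])=\phi_\tau(x_0,v_0,[s_0])$ for some $\tau>0$ and $(x_0,v_0,[s_0])\in\Ga_0^+(u)$; set $\eta(r):=\pi\phi_{r-s_0}(x_0,v_0,[s_0])$. Since $(x,v,[s])\in\Sigma^+$ by positive invariance, the calibration/semistatic condition reads
\[
u(\eta(r),[r])-u(x_0,[s_0])=A_{L+c}(\eta|_{[s_0,r]})=\Phi_c((x_0,[s_0]),(\eta(r),[r]))\quad\text{for all }r\ge s_0,
\]
and similarly $A_{L+c}(\eta|_{[s,r]})=\Phi_c((x,[s]),(\eta(r),[r]))$ for $r>s$. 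Hence for small $\de>0$, domination sandwiches $u$ near $(x,[s])$ between the lower bound $u(\eta(s+\de),[s+\de])-\Phi_c(\,\cdot\,,(\eta(s+\de),[s+\de]))$ and the upper bound $u(x_0,[s_0])+\Phi_c((x_0,[s_0]),\,\cdot\,)$, with both bounds equal to $u(x,[s])$ at $(y,[t])=(x,[s])$. The main technical step is to establish $C^1$ regularity of both bounds at $(x,[s])$: since $(x,[s])$ lies strictly between the endpoints of the smooth minimizing extremal $\eta$, standard action-value regularity (strict convexity of $L$ in $v$, smooth dependence of the minimal action on endpoints once the minimizing $n$ is locally constant) gives that both envelopes are $C^1$ at $(x,[s])$ with common gradient equal to the Legendre transform of $\dot\eta(s)$ in the $x$-direction and the corresponding energy in the $t$-direction. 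The sandwich then forces $u$ to be differentiable at $(x,[s])$ with that gradient.

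Item (2) is then immediate: if $(x,v_1,[s]),(x,v_2,[s])\in\Ga^+(u)$, both project to $(x,[s])\in\pi(\Ga^+(u))$, where $u$ is differentiable by (3); by (1) each $v_i$ is Legendre-conjugate to $d_xu(x,[s])$, so $v_1=v_2$, proving injectivity. The inverse has the explicit form $\pi^{-1}(x,[s])=(x,\partial_pH(x,d_xu(x,[s]),s),[s])$, so its Lipschitz character reduces to the Lipschitz character of $d_xu$ on $\pi(\Ga^+(u))$. This in turn follows from a standard compactness argument: superlinearity of $L$ and the Lipschitz bound on $u$ force a uniform bound on calibrating velocities, and continuous dependence of the Euler--Lagrange flow on initial data, combined with the graph property just proved, yields the required Lipschitz modulus on $\pi^{-1}$.
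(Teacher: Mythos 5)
Your treatment of the Lipschitz property in item~1 does not work, and it fails at precisely the point where the time-periodic problem differs from the autonomous one. A curve admissible in the definition of $\Phi_c$ is of the form $\sigma\mapsto(\ga(\sigma),[\sigma])$: its $\mS^1$-component is the time itself and advances at unit speed, so the transit time from $(x,[s])$ to $(y,[t])$ must be congruent to $t-s$ modulo $1$; in particular if $[t]=[s]$ and $y\neq x$ the transit time is at least $1$, and there is no ``short curve of bounded velocity'' joining the two points. Consequently the inequality $\Phi_c((x,[s]),(y,[t]))\le C\,d((x,[s]),(y,[t]))$ is false in general: for a mechanical Lagrangian $L=\tfrac12|v|^2-U(x)$ with $\max U=0$ (so $c=0$ and $L+c\ge 0$), and $x$ away from $\{U=0\}$, every admissible curve from $(x,[s])$ to $(y,[s])$ has length at least $1$ in time and its $(L+c)$-action is bounded below by a positive constant independent of $y$ near $x$. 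This is exactly why the paper stresses that dominated functions need not even be continuous in the periodic case, and why Lemma~\ref{wk-lip} proves the Lipschitz property by a different mechanism: it uses the backward calibrating curve $z$ on an interval of length $\de>1$, joins nearby points to $z(t_0-\de)$ by extremals over a time interval bounded away from zero, and controls the derivative of the endpoint action via the first variation formula, Mather's a priori bound (Lemma~\ref{le-mather}) on $\lV \dot z\rV$, and continuous dependence on initial conditions. Your derivation of $d_tu+H(x,d_xu,t)\le c$ at differentiability points and of the Legendre conjugacy along the realizing curve is fine, but without the Lipschitz property the almost-everywhere differentiability you invoke is unavailable, so item~1 is not proved as written.

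In item~2, deducing injectivity from item~3 plus Legendre conjugacy is legitimate (and a genuinely different route from the paper, which obtains injectivity and the Lipschitz inverse simultaneously from Mather's crossing lemma, Lemma~\ref{L:MATHER2}, with no appeal to differentiability). But the Lipschitz character of $\pi^{-1}$ is the quantitative half of the statement, and your argument for it --- uniform velocity bounds, continuous dependence of the Euler--Lagrange flow, ``combined with the graph property just proved'' --- yields at best (uniform) continuity of $v$ as a function of the base point, never a Lipschitz constant, and as phrased it is circular since the Lipschitz inverse is what is to be proved. Some quantitative input is unavoidable; in the paper it is the crossing lemma: two calibrated arcs whose velocity separation exceeds a fixed multiple of their base separation can be replaced by crossing extremals of strictly smaller total action, contradicting the two calibration identities together with domination. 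Finally, in item~3 the claim that the two envelopes built from $\Phi_c$ are $C^1$ at $(x,[s])$ is unjustified: $\Phi_c$ is an infimum over the number of laps and over possibly non-unique minimizers, and it can fail to be differentiable (indeed even continuous). The correct version of your sandwich --- and the paper's actual argument --- replaces those envelopes by one-sided variations of the specific calibrating extremal through $(x,[s])$ (backward from $\ga(s-\de)$ and forward to $\ga(s+\de)$), using the first variation formula to show that the $\limsup$ and $\liminf$ of the difference quotients coincide.
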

 
  Observe that since a weak KAM solution $u$ is Lipschitz,
  by Rademacher's theorem it is differentiable Lebesgue almost
  everywhere.
  
  Define the {\it Aubry set} $\cA$ as 
  $$
  \cA:=\{\,(x,[t])\in M\times\mS^1\,|\, h_c\big((x,[t]),(x,[t])\big)=0\,\}.
  $$ 
  We define an equivalence relation on $\cA$ by $(x,[s])\sim (y,[t])$ 
  if and only if 
 \[
 \Phi_c((x,[s]),(y,[t]))+\Phi_c((y,[t]),(x,[s]))=0. \]
 The equivalence classes of this relation are called
 {\it static classes}.
 
 Let $\fA$ be the set of static classes.
 For each static class $\Ga\in\fA$ choose a point $(p,[s])\in\Ga$ and
 let $\AA$ be the set of such points.

  \begin{remark}\label{Phi=h}
  
  Observe that by item~4 of proposition~\ref{barrera}, 
  if $(p,[\tau])\in\cA$ then
  $$
  h_c((p,[\tau]),(x,[t]))=\Phi_c((p,[\tau]),(x,[t])).
  $$
  \end{remark}
 
 \begin{theorem}\label{gonzalo}\quad
 The map $\{f:\AA\to\re\,|\, f\text{ dominated }\}\longrightarrow\cS^-$
 \begin{equation*}
 f\longmapsto  u_f(x,[t]) =\min\limits_{(p,[s])\in\AA}f(p,[s])+
 h_c((p,[s]),(x,[t])),
 \end{equation*}
 and the map $\{f:\AA\to\re\,|\, f\text{ dominated }\}\longrightarrow\cS^+$
 \begin{equation*}
 f\longmapsto  v_f(x,[t]) =\max\limits_{(p,[s])\in\AA}f(p,[s])
 -h_c((x,[t]),(p,[s])),
 \end{equation*}
 are bijections.
 \end{theorem}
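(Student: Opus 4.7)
The plan is to show $f\mapsto u_f$ is a bijection $\{f:\AA\to\re\,|\,f\text{ dominated}\}\to\cS^-$; the dual map $f\mapsto v_f$ into $\cS^+$ is treated symmetrically by reversing time. The argument splits into three steps: (i)~verify $u_f\in\cS^-$, (ii)~check $u_f|_{\AA}=f$ (injectivity), and (iii)~show that every $u\in\cS^-$ equals $u_{u|_{\AA}}$ (surjectivity). Throughout I use Remark~\ref{Phi=h} to identify $h_c$ and $\Phi_c$ whenever the first argument lies in $\cA$.

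For (i), domination follows by picking, for each $(x,[s])$, a minimizer $(p,[\tau])\in\AA$ in the definition of $u_f(x,[s])$ and applying item~4 of Proposition~\ref{barrera}:
\[u_f(y,[t])\le f(p,[\tau])+h_c((p,[\tau]),(y,[t]))\le f(p,[\tau])+h_c((p,[\tau]),(x,[s]))+\Phi_c((x,[s]),(y,[t]))=u_f(x,[s])+\Phi_c((x,[s]),(y,[t])).\]
To construct a backward calibrating curve at $(x,[s])$, I use the same $(p,[\tau])$ and pick $n_k\uparrow\infty$ with Tonelli minimizers $\ga_k:[s-T_k,s]\to M$ of $\Phi_c^{n_k}((p,[\tau]),(x,[s]))$ whose actions tend to $h_c((p,[\tau]),(x,[s]))$. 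Superlinearity and convexity provide uniform Lipschitz bounds, so after extracting a subsequence $\ga_k$ converges uniformly on compact subintervals to an extremal $\ga:(-\infty,s]\to M$ with $\ga(s)=x$. For any $t\le s$, the splitting
\[A_{L+c}(\ga_k|_{[s-T_k,s]})=A_{L+c}(\ga_k|_{[s-T_k,t]})+A_{L+c}(\ga_k|_{[t,s]})\ge\Phi_c^{m_k}((p,[\tau]),(\ga_k(t),[t]))+A_{L+c}(\ga_k|_{[t,s]}),\]
with $m_k\to\infty$, together with lower semicontinuity of the action and the $\liminf$ definition of $h_c$, gives $h_c((p,[\tau]),(\ga(t),[t]))+A_{L+c}(\ga|_{[t,s]})\le h_c((p,[\tau]),(x,[s]))$; combined with $u_f\prec L+c$ this forces the equality $u_f(x,[s])-u_f(\ga(t),[t])=A_{L+c}(\ga|_{[t,s]})$, so $\ga$ realizes $u_f$.

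For (ii), $u_f(p,[\tau])\le f(p,[\tau])+h_c((p,[\tau]),(p,[\tau]))=f(p,[\tau])$ since $(p,[\tau])\in\cA$, while for every $(q,[\sigma])\in\AA$ domination of $f$ gives $f(q,[\sigma])+h_c((q,[\sigma]),(p,[\tau]))=f(q,[\sigma])+\Phi_c((q,[\sigma]),(p,[\tau]))\ge f(p,[\tau])$, so the minimum is attained at $(p,[\tau])$ and $u_f|_{\AA}=f$. The main obstacle is step (iii). The easy inequality $u\le u_f$ follows from domination of $u$ together with Remark~\ref{Phi=h}. For $u\ge u_f$, given $(x,[s])$ take the backward realizing curve $\ga:(-\infty,s]\to M$ of $u$ and a sequence $t_n\to-\infty$ with $(\ga(t_n),[t_n])\to(q,[\sigma])$; the crux is that $(q,[\sigma])\in\cA$, which is the standard weak-KAM fact that $\alpha$-limits of backward calibrating curves are in the Aubry set. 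Passing to the $\liminf$ in $u(x,[s])-u(\ga(t_n),[t_n])=A_{L+c}(\ga|_{[t_n,s]})\ge\Phi_c^{m_n}((\ga(t_n),[t_n]),(x,[s]))$, and using Lipschitz continuity of $u$ and of $h_c$, yields $u(x,[s])\ge u(q,[\sigma])+h_c((q,[\sigma]),(x,[s]))$. Letting $(p,[\tau])\in\AA$ represent the static class of $(q,[\sigma])$, the relation $\Phi_c((p,[\tau]),(q,[\sigma]))+\Phi_c((q,[\sigma]),(p,[\tau]))=0$ and domination of $u$ give $u(q,[\sigma])\ge u(p,[\tau])+\Phi_c((p,[\tau]),(q,[\sigma]))$, while Proposition~\ref{barrera}.4 combined with Remark~\ref{Phi=h} yields $\Phi_c((p,[\tau]),(q,[\sigma]))+h_c((q,[\sigma]),(x,[s]))\ge h_c((p,[\tau]),(x,[s]))$. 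Assembling, $u(x,[s])\ge f(p,[\tau])+h_c((p,[\tau]),(x,[s]))\ge u_f(x,[s])$.
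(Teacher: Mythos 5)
Your proposal is correct and follows essentially the same route as the paper: surjectivity via the backward calibrating curve, the fact that its $\alpha$-limit points lie in the Aubry set, continuity of $\Phi_c$ at such points, the reduction to one representative per static class, and $u_f|_{\AA}=f$ for injectivity. The only organizational difference is that the paper actually proves the two facts you invoke as standard (lemmas~\ref{AlSemi} and~\ref{PhiContP}) and obtains $u_f\in\cS^-$ by combining lemmas~\ref{hiswk} and~\ref{minwk} (thereby avoiding any need for the minimum over $\AA$ to be attained), whereas you construct the calibrating curve for $u_f$ directly after picking a minimizing $(p,[\tau])\in\AA$.
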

 
 \section{The Peierls barrier}\label{Critical Value}
 
 We will be using the following lemma due to Mather~\cite{M}.
 We say that an absolutely continuous curve $\ga:[a,b]\to M$
 is a {\it minimizer } if 
 $A_L(\ga)\le A_L(\eta)$ for any absolutely continuous
 curve $\eta:[a,b]\to M$ with $\eta(a)=\ga(a)$ and $\eta(b)=\ga(b)$.
 It turns out that a minimizer is a solution of the Euler-Lagrange
 equation $\tfrac d{dt} L_v=L_x$.
 
 \begin{lemma}
 \label{le-mather}
 There is $A>0$ such that if $b-a\ge 1$ and 
 $\ga:[a,b]\to M$ is a minimizer, then $|\dot{\ga}(t)|\le A$ 
 for $t\in[a,b]$. 
 \end{lemma}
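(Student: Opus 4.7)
The plan is to bootstrap from an $L^1$-bound on $|\dot\ga|$ to a pointwise bound, using superlinearity and continuity of the Euler--Lagrange flow over unit-time intervals.

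First I would establish a uniform action bound on every unit subinterval. Fix $[\a,\be]\subseteq[a,b]$ with $\be-\a=1$, and join $\ga(\a)$ to $\ga(\be)$ by a minimizing geodesic $\eta:[\a,\be]\to M$ traversed at constant speed at most $d:=\mathrm{diam}\,M$. Since $\ga$ minimizes on $[a,b]$ it also minimizes on $[\a,\be]$ (otherwise one could lower the action on $[a,b]$ by substitution on this subinterval), hence
\[
A_L(\ga|_{[\a,\be]})\le A_L(\eta)\le C_1:=\sup\{L(x,v,t):|v|\le d\}<\infty,
\]
finiteness following from continuity of $L$ and compactness of $M\times\mS^1$. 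By superlinearity pick $C_2$ with $L(x,v,t)\ge|v|-C_2$ for all $(x,v,t)$; integrating over $[\a,\be]$ gives
\[
\int_\a^\be|\dot\ga|\,d\tau\le A_L(\ga|_{[\a,\be]})+C_2\le C_1+C_2=:A_0.
\]

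Next I would promote this average bound to a pointwise one. For arbitrary $t\in[a,b]$, the hypothesis $b-a\ge 1$ lets me pick a unit subinterval $[\a,\be]\subseteq[a,b]$ containing $t$. The $L^1$-bound above together with the mean value theorem produces $t_0\in[\a,\be]$ with $|\dot\ga(t_0)|\le A_0$ and $|t-t_0|\le 1$. Since $\ga$ solves the Euler--Lagrange equation, $(\ga(t),\dot\ga(t),[t])=\phi_{t-t_0}(\ga(t_0),\dot\ga(t_0),[t_0])$. The set $K_0:=\{(x,v,[s])\in TM\times\mS^1:|v|\le A_0\}$ is compact, and by completeness of $\phi_t$ (hypothesis~4) together with continuity of the flow, the image $\phi([-1,1]\times K_0)$ is a compact subset of $TM\times\mS^1$; hence the continuous function $(x,v,[s])\mapsto|v|$ attains a maximum $A$ on it, depending only on $L$, yielding $|\dot\ga(t)|\le A$.

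The main point requiring care is this last compactness step: without completeness one could not rule out trajectories whose velocities escape to infinity within unit time, even with the starting velocity bounded by $A_0$. Once completeness is available the argument is a routine continuity plus compactness statement on $TM\times\mS^1$, and the rest of the proof reduces to the two quantitative ingredients: superlinearity (to convert an action bound into an $L^1$-velocity bound) and compactness of $M\times\mS^1$ (to produce the test curve $\eta$ and to bound $C_1$).
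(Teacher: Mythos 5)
Your argument is correct: the cut-and-paste comparison with a constant-speed geodesic gives the uniform action bound on unit subintervals, superlinearity converts it into an $L^1$ bound on $|\dot\ga|$, hence a time $t_0$ with $|\dot\ga(t_0)|\le A_0$, and completeness plus continuity of the Euler--Lagrange flow (together with the Tonelli regularity the paper asserts, namely that minimizers are genuine E--L solutions, which is what lets you flow from $(\ga(t_0),\dot\ga(t_0),[t_0])$) propagates this to a uniform bound on the whole interval. The paper itself does not prove this lemma --- it is quoted from Mather~\cite{M} --- and your proof is essentially the standard a priori compactness argument given there, so there is nothing further to add.
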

  
The proof of  most of Propositions~\ref{basic} and ~\ref{barrera} follow
standard arguments. We only give the proof of the Lipschitz continuity
of $h_c$.

  \begin{lemma}\label{hiswk} Given $(z,[\sigma])\in M\times {\Bbb S}^1$ define
  \[
  u(x,[t]) := h_c((z,[\sigma];x,[t]),\quad v(x,[t]):=-h_c((x,[t];z,[\sigma]).
  \]
  Then $u\in\cS^-$ and $v\in\cS^+$.
  \end{lemma}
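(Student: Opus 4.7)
The plan is to establish $u\prec L+c$ directly from the triangle inequality of Proposition~\ref{barrera}(4) and to produce a calibrating curve for $u$ as a compactness limit of near-optimal curves in the $\liminf$ defining $h_c$; the argument for $v$ will be time-symmetric.

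The domination $u(y,[\tau])-u(x,[t])\le\Phi_c((x,[t]),(y,[\tau]))$ is exactly Proposition~\ref{barrera}(4) rearranged. For $v\prec L+c$ I first establish the dual triangle inequality
\[h_c((x,[t]),(z,[\sigma]))\le\Phi_c((x,[t]),(y,[\tau]))+h_c((y,[\tau]),(z,[\sigma])),\]
which follows by concatenating a near-minimizer for $\Phi_c((x,[t]),(y,[\tau]))$ of some fixed time $m$ with near-minimizers of $\Phi^n_c((y,[\tau]),(z,[\sigma]))$ and taking the $\liminf$ as $n\to\infty$.

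For the calibrating curve of $u$ at $(x,[t])$, I choose $n_k\to\infty$ and minimizers $\ga_k:[a_k,t]\to M$ with $\ga_k(a_k)=z$, $\ga_k(t)=x$, $[a_k]=[\sigma]$, $\lfloor t-a_k\rfloor=n_k$, and $A_{L+c}(\ga_k)\to u(x,[t])$. Lemma~\ref{le-mather} bounds $|\dot\ga_k|$ uniformly, so a diagonal Arzela--Ascoli extraction produces a subsequential limit $\ga:(-\infty,t]\to M$, uniform on every compact subinterval, with $\ga(t)=x$ and $\ga$ solving the Euler--Lagrange equation.

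To check calibration at fixed $\tau<t$, split $A_{L+c}(\ga_k)=A_{L+c}(\ga_k|_{[a_k,\tau]})+A_{L+c}(\ga_k|_{[\tau,t]})$. Uniform convergence on $[\tau,t]$ gives $A_{L+c}(\ga_k|_{[\tau,t]})\to A_{L+c}(\ga|_{[\tau,t]})$, so $A_{L+c}(\ga_k|_{[a_k,\tau]})\to u(x,[t])-A_{L+c}(\ga|_{[\tau,t]})$. With $m_k=\lfloor\tau-a_k\rfloor\to\infty$, the left side is bounded below by $\Phi^{m_k}_c((z,[\sigma]),(\ga_k(\tau),[\tau]))$. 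A uniform endpoint Lipschitz estimate on $\Phi^m_c$ for $m$ large (obtained from Mather's velocity bound by modifying the last unit-time piece of a minimizer to land at a nearby point), combined with $\ga_k(\tau)\to\ga(\tau)$, yields
\[\liminf_k\Phi^{m_k}_c((z,[\sigma]),(\ga_k(\tau),[\tau]))\ge h_c((z,[\sigma]),(\ga(\tau),[\tau]))=u(\ga(\tau),[\tau]).\]
Hence $u(x,[t])-u(\ga(\tau),[\tau])\ge A_{L+c}(\ga|_{[\tau,t]})$, and the reverse inequality is domination. Thus $u\in\cS^-$; the construction of a calibrating curve for $v$ on $[t,\infty)$, via minimizers $\eta_k:[t,b_k]\to M$ from $x$ to $z$, is entirely symmetric. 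The main technical obstacle is precisely this uniform endpoint Lipschitz bound on $\Phi^m_c$---without it, the discontinuity of $\Phi_c$ would prevent the required approximation as $\ga_k(\tau)\to\ga(\tau)$; fortunately it is a standard consequence of Lemma~\ref{le-mather}.
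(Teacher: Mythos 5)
Your proof is correct and its skeleton is the same as the paper's: domination comes from the triangle inequality of Proposition~\ref{barrera}(4), and the calibrating curve is obtained as a limit of near-optimal curves for the $\liminf$ defining $h_c$, with compactness supplied by Lemma~\ref{le-mather}. The genuine difference is how the limit step is closed. The paper replaces $h_c((z,[\sigma]),(\eta(s),[s]))$ by $\lim_k h_c((z,[\sigma]),(\ga_k(s),[s]))$ and bounds this by the actions $A_{L+c}(\ga_k|_{[\sigma-n_k,s]})$, i.e.\ it passes to the limit in $h_c$ itself, implicitly using continuity of $h_c$ in its second argument --- a property that the paper only establishes afterwards, from this very lemma together with Lemma~\ref{wk-lip}. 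You instead bound $A_{L+c}(\ga_k|_{[a_k,\tau]})$ below by $\Phi^{m_k}_c((z,[\sigma]),(\ga_k(\tau),[\tau]))$, move the endpoint with a uniform Lipschitz estimate on $\Phi^m_c$ (a standard consequence of Lemma~\ref{le-mather}, and the same curve-modification estimate the paper itself uses in Lemmas~\ref{wk-lip} and~\ref{AlSemi}), and then invoke $h_c=\liminf_n\Phi^n_c$; this makes the step self-contained and avoids any appeal to the not-yet-proved continuity of $h_c$, at the cost of having to state and justify the endpoint estimate. Two minor remarks: convergence of $A_{L+c}(\ga_k|_{[\tau,t]})$ requires $C^1$, not merely uniform, convergence --- harmless here because the $\ga_k$ are Euler--Lagrange solutions with $|\dot\ga_k|\le A$, so one extracts convergent terminal velocities exactly as the paper does by taking $v_k\to w$; and your dual triangle inequality used for the domination of $v$, proved by concatenation, is a correct supplement to Proposition~\ref{barrera}(4) that the paper leaves implicit, since its written proof treats only $u$.
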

  
  \begin{proof}
   By item 4 of proposition~\ref{barrera},
  $h(z,[\sigma]),(x,[t]))\le h_c((z,[\sigma]),(y,[s]))+\Phi_c((y,[s]),(x,[t]))$
  for all $(y,[s]),(x,[t])\in M\times {\Bbb S}^1$. Thus $u\prec L+c$.
  
  Given $(x,[t])\in M\times {\Bbb S}^1$ choose sequences $n_k\to+\infty$,
  $n_k\in\Z$ and $(x,v_k)\in T_xM$ such that
  $$
  h_c((z,[\sigma]),(x,[t]))=\lim_k
  A_{L+c}\bigl(\ga_k\vert_{[\sigma-n_k,t]}\bigr),
  $$
  where $\ga_k(s)=\pi\,\vr_{s-t}(x,v_k,t)$ is the solution of the
  Euler-Lagrange equation such that $(\ga_k(t),\dot\ga_k(t))=(x,v_k)$.
  By lemma~\ref{le-mather}, the norm $\lV v_k\rV$ is uniformly bounded.
  Choose a convergent subsequence $v_k\to w$. 
  Let $\eta(s):=\pi\,\vr_{s-t}(x,w,t)$, then for any fixed $s<0$,
  \begin{align*}
  h_c((z,[\sigma]),(x,[t]))
  &\le h_c((z,[\sigma]),(\eta(s),[s]))+A_{L+c}\bigl(\eta\vert_{[s,t]}\bigr)
  \\
  &=\lim_k h_c((z,[\sigma]),(\ga_k(s),[s]))
     +A_{L+c}\bigl(\ga_k\vert_{[s,t]}\bigr)
  \\
   &\le \lim_k A_{L+c}\bigl(\ga_k|_{[\sigma-n_k,s]}\bigr)
   +A_{L+c}\bigl(\ga_k|_{[s,t]}\bigr)
   \\
   &=h_c((z,[\sigma]),(x,[t])).
   \end{align*}
   So that
   $u(x,[t])-u(\eta(s),[s])=A_{L+c}\bigl(\eta|_{[s,t]}\bigr)$
    for all $s<0$.
  
  \end{proof}
  
  For autonomous lagrangians, dominated functions are Lipschitz.
  In contrast, for time periodic lagrangians the action potential
  is dominated but it is not continuous at $\big((x,s),(x,s)\big)$
  when $(x,s)$ is not in the Aubry set.
  Nevertheless, we have the following:
 
  \begin{lemma}\label{wk-lip}
  If $u:M\times\mS^1\to\R$ is a weak KAM solution
  (i.e. $u\in\cS^+\cup\cS^-$) then it is Lipschitz.
  Moreover the Lipschitz constant does not depend on $u$.
  \end{lemma}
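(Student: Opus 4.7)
The plan is to exploit the calibrating curves provided by the definition of $\cS^\mp$ together with Mather's uniform velocity bound (Lemma~\ref{le-mather}) to estimate $|u(y,[t])-u(x,[s])|$ linearly in the $M\times\mS^1$-distance between nearby points, with a constant depending only on $L$. I will treat $u\in\cS^-$ in detail; the forward case is symmetric, using forward calibrating curves.

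Given nearby points $(x,[s]),(y,[t])\in M\times\mS^1$, I choose real representatives with $t-s\in[0,1)$ and let $\rho$ denote their combined distance. Let $\ga_x:(-\infty,s]\to M$ be a calibrating curve for $u$ at $(x,[s])$. Since $\ga_x|_{[s-1,s]}$ is a minimizer on an interval of length $1$, Lemma~\ref{le-mather} gives $|\dot\ga_x|\le A$ there, so $L+c$ is uniformly bounded above and below along this piece by constants depending only on $L$.

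Next I would build a comparison curve $\eta:[s-1,t]\to M$ that coincides with $\ga_x$ on $[s-1,s-\rho]$ and interpolates, via a minimizing geodesic in $M$, from $\ga_x(s-\rho)$ to $y$ over the final sub-interval $[s-\rho,t]$ of length at least $\rho$. Since $d_M(\ga_x(s-\rho),y)\le A\rho+d_M(x,y)\le(A+1)\rho$, the interpolation velocity is bounded by $A+1$, hence $L+c$ is uniformly bounded along this piece as well. As $\eta$ differs from $\ga_x|_{[s-1,s]}$ only on a final sub-interval of total length at most $2\rho$, a direct action comparison yields
\[
A_{L+c}(\eta)-A_{L+c}\bigl(\ga_x|_{[s-1,s]}\bigr)\le C\rho
\]
for a constant $C$ depending only on $L$. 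Combining the domination bound $u(y,[t])-u(\ga_x(s-1),[s-1])\le A_{L+c}(\eta)$ with the calibration identity $u(x,[s])-u(\ga_x(s-1),[s-1])=A_{L+c}(\ga_x|_{[s-1,s]})$ gives $u(y,[t])-u(x,[s])\le C\rho$. Running the same construction from the calibrating curve at $(y,[t])$ produces the reverse inequality; since local Lipschitz bounds propagate globally by chaining along a geodesic in $M\times\mS^1$, this yields Lipschitz continuity on the whole space.

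The hard part will be ensuring that the comparison constant $C$ depends only on $L$ and not on $u$ or the chosen points. This rests on (i) Mather's uniform velocity bound $A$ for calibrating minimizers, (ii) continuity of $L$ with compactness of $M\times\mS^1$, which yields uniform upper and lower bounds of $L+c$ on every bounded-velocity subset of $TM\times\mS^1$, and (iii) choosing the interpolation time commensurate with $\rho$, which is essential since shorter interpolations would blow up the velocity and destroy the uniformity.
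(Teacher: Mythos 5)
Your overall strategy (rewind along the backward calibrating curve, reconnect to $(y,[t])$ with a slow minimizing geodesic, and compare actions using Mather's velocity bound) is sound and is genuinely different from the paper, which instead deforms through a family of Euler--Lagrange solutions on an interval of length $\delta>1$ and bounds the $r$-derivative of the action by boundary terms via the first variation formula. However, as written your estimate has a genuine gap in the case where $[t]$ lies slightly \emph{behind} $[s]$ on $\mS^1$. With your choice of representatives $t-s\in[0,1)$, this case forces $t-s$ close to $1$, so the sub-interval $[s-\rho,t]$ on which $\eta$ differs from $\ga_x$ has length close to $1$, not at most $2\rho$; the geodesic piece then carries an action of order $1$ (small speed does not make $L+c$ small), and you only get $u(y,[t])-u(x,[s])\le C$, not $\le C\rho$. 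Choosing the other representative, $t-s\approx-\varepsilon$, does not help: the interpolation time $\rho-\varepsilon$ can be much smaller than the distance $\approx A\rho$ to be covered, so the interpolation speed, hence the constant, blows up. Nor does your symmetry step rescue this: for a pair with $[t]$ slightly \emph{ahead} of $[s]$, the reverse inequality $u(x,[s])-u(y,[t])\le C\rho$ comes from calibrating at $(y,[t])$ and targeting $(x,[s])$, which is exactly the ``slightly behind'' configuration, so the swapped construction has the same defect; for each nearby pair you therefore control only one of the two differences.

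The gap is fixable within your scheme: rewind by a length proportional to $\rho$ rather than exactly $\rho$, say to time $s-2\rho$, and interpolate from $\ga_x(s-2\rho)$ to $y$ over the forward interval ending at the representative of $[t]$ nearest to $s$; in both configurations this interpolation time lies between $\rho$ and $3\rho$, the distance to cover is at most $(2A+1)\rho$, so the speed is bounded by a constant depending only on $A$, and both the geodesic action and the discarded piece $A_{L+c}(\ga_x|_{[s-2\rho,s]})$ are $O(\rho)$ with constants depending only on $L$. With that modification (plus the observation, which you use implicitly, that calibrating curves of dominated functions are minimizers, so Lemma~\ref{le-mather} applies to them), your comparison-curve argument does yield the uniform Lipschitz constant, and it is somewhat more elementary than the paper's proof, which needs differentiability of the action along a family of E--L solutions but in exchange handles the backward-in-time endpoint displacement automatically through the boundary term $\bigl[L+c\bigr](s_0-t_0)$.
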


  \begin{proof}
  Assume that $u\in\cS^-$, the case $u\in\cS^+$ is similar.
 Let $(x,[t_0]),(y,[s_0])\in M\times{\mS}^1$ be nearby points
 with $|s_0-t_0|<\tfrac 14$.
  Let $\ga:[0,1]\to M$ be a length minimizing geodesic joining $x$ to
  $y$ and let $\tau(r)=t_0+r\,(s_0-t_0)$, $r\in[0,1]$.
  Fix $\de>1$ and let $z:[t_0-\de,t_0]\to M$ be such that
  \begin{equation}\label{equ@r=0}
  u(x,[t_0])=u(z(t),[t])+\int_t^{t_0} L(z,\dot z)+c\;dt
  \quad \text{ for all }t_0-\de<t\le t_0.
  \end{equation}
  For $r\in[0,1]$, let $\eta(r,t)$, $t\in[t_0-\delta,\tau(r)]$, be a locally
  minimizing solution of~(E-L) such that $\eta(r,t_0-\delta)=z(t_0-\delta)$ and
  $\eta(r,\tau(r))=\ga(r)$.
  
  Then
  $$
  u(\ga(r),[\tau(r)])\le
  u(z(t_0-\delta),[t_0-\delta])
  +\int_{t_0-\delta}^{\tau(r)}L\bigl(\eta,\tfrac{\partial\eta}{\partial
  t},t\bigr)+c\;\;dt.
  $$
  with equality for $r=0$. Substracting the equality~\eqref{equ@r=0}
   at $r=0$, we get that
  \begin{equation}\label{Deltau}
  u(\ga(r),[\tau(r)])-u(x,[t_0])\le 
  \int_{t_0-\delta}^{\tau(r)}\big(L+c\big)\;dt
   -A_{L+c}(z|_{[t_0-\delta,t_0]}).
  \end{equation}
  Observe that this formula holds either for
  $s_0\le t_0$ or $t_0\le s_0$.
  As we shall see below, formula~\eqref{Deltau}
  implies that $u(y,s)-u(x,t)\le K\,\big(\, |s-t|+d(x,y)\,\big)$
  for some fixed $K>0$.
  Then changing the roles of $s$ and $t$ we get 
  that $u$ is Lipschitz.
  
  Indeed,
  differentiating the right hand side 
  and integrating by parts, we have 
  $$
  \frac{d\;}{dr}\int_{t_0-\delta}^{t_0+r\,(s_0-t_0)}
  L\bigl(\eta,\tfrac{\partial\eta}{\partial
  t},t\bigr)+c\;\;dt= $$
 $$=\left[L\bigl(\eta,\tfrac{\partial\eta}{\partial
  t},t\bigr)\big\vert_{(r,\tau(r))}+c\right](s_0-t_0)
  +\int_{t_0-\delta}^{\tau(r)} L_x\;\tfrac{\partial \eta}{\partial r}
  +L_v\;\tfrac{\partial^2\eta}{\partial t\partial r}
  $$
  $$=\left[L\bigl(\eta,\tfrac{\partial\eta}{\partial
  t},t\bigr)\big\vert_{(r,\tau(r))}+c\right](s_0-t_0)
  +\frac{\partial L}{\partial
  v}\bigl(\eta,\tfrac{\partial\eta}{\partial
  t},t)\bigr)\Big\vert_{(r,\tau(r))}
  \cdot\frac{\partial\eta}{\partial r}\Big\vert_{(r,\tau(r))}
  $$
 
  Observe that since $u$ is dominated the realizing curve $z$
  must be a minimizer.
  By lemma~\ref{le-mather},  $\lV\dot z\rV$ is uniformly bounded. 
  By the continuity of the solutions of~(E-L) with respect to initial
  values, $\lV\tfrac{\partial \eta}{\partial t}\rV$ is uniformly
  bounded. Hence there is a uniform constant $K>0$ (independent of
  $z(t),x,y,[s_0],[t_0], u$) such that 
  $$
  \lv L\bigl(\eta,\tfrac{\partial\eta}{\partial
  t},t\bigr)+c\rv\le K
  \qquad\text{ and }\qquad
  \lV\tfrac{\partial L}{\partial
  v}(\eta,\tfrac{\partial\eta}{\partial
  t},t)\rV<K.
  $$
  Since $\left.\dfrac{\partial\eta}{\partial
  r}\right\vert_{(r,\tau(r))}=\dot\ga(r)$,
  we get that
  \begin{align*}
  \frac{d\;}{dr}\left[\int_{t_0-\delta}^{\tau(r)} [L+c]\;\;
  -A_{L+c}(z)\right]
  \le K \; |s_0-t_0| + K\; \lV\dot\ga\rV. 
  \end{align*}
  The value of the right hand side of~\eqref{Deltau} is 0 at
  $r=0$.  Integrating this inequality,
  $$
  u(y,[s_0])-u(x,[t_0])\le K\;\big[ |s_0-t_0|+d(x,y)\big].
  $$
  Interchanging the roles of $(x,[t_0])$ and $(y,[s_0] )$ we obtain that
  the function  $u$ is Lipschitz.
  
  \end{proof}
  
    Combining lemmas~\ref{hiswk} and~\ref{wk-lip} 
    we get that the functions
    $f,g:M\times \mS^1\to\re$, $f(y,[t])=h_c((x,[s]),(y,[t]))$
    and $g(x,[s])=h_c((x,[s]),(y,[t]))$
    are Lipschitz. This implies that $h_c$ is Lipschitz. 
 \section{Subsolutions of the Hamilton Jacobi equation}
 \label{Hamilton Jacobi}
 
Following the same ideas as in~\cite{gafa}, one obtains
  \begin{lemma}
 \label{menor}
 If $k$ is a real number such that  there exists a function 
 $f$ in $C^1(M \times{\mS}^1)$ 
 subsolution of the Hamilton Jacobi equation 
 $$H(x,d_xf)+d_tf\le k$$
 Then $k \ge c(L)$.
 \end{lemma}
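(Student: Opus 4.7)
The plan is to use the characterization in item~2 of Proposition~\ref{basic}: $c(L)$ equals the infimum of those $k$ for which $\int_\gamma (L+k)\ge 0$ on every closed curve. So it suffices to show that the existence of a $C^1$ subsolution $f$ of $H(x,d_xf,t)+d_tf\le k$ forces this inequality along every closed curve $\gamma$.

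The key step is Fenchel's (or Legendre's) inequality, which is the direct consequence of the definition~\eqref{eq:hamiltonian} of $H$: for every $(x,v,t)$ and every covector $p\in T_x^*M$,
\[
L(x,v,t)\ge p\cdot v - H(x,p,t).
\]
Applying this with $p=d_xf(x,[t])$ and combining with the subsolution inequality $H(x,d_xf,t)\le k-d_tf$ gives pointwise
\[
L(x,v,t)+k\ \ge\ d_xf(x,[t])\cdot v\;+\;d_tf(x,[t]).
\]

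Now let $\gamma:[a,b]\to M$ be any closed curve, so that $\gamma(a)=\gamma(b)$ and $b-a$ is an integer. Evaluating along $\gamma$ and integrating,
\[
\int_a^b\big(L(\gamma,\dot\gamma,t)+k\big)\,dt\ \ge\ \int_a^b\Big(d_xf\cdot\dot\gamma(t)+d_tf\Big)dt\ =\ \int_a^b\frac{d}{dt}\,f(\gamma(t),[t])\,dt,
\]
and the right-hand side telescopes to $f(\gamma(b),[b])-f(\gamma(a),[a])=0$ since $[b]=[a]$ in $\mS^1$ and $\gamma(b)=\gamma(a)$. Hence $\int_\gamma(L+k)\ge 0$ for every closed curve, so by Proposition~\ref{basic}(2) we conclude $k\ge c(L)$.

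There is no real obstacle here; the only mildly delicate point is making sure one can chain-rule the composition $t\mapsto f(\gamma(t),[t])$, which is fine because $f$ is $C^1$ on $M\times\mS^1$ and $\gamma$ is absolutely continuous (and the factor $\tfrac{d}{dt}[t]=1$ accounts for the $d_tf$ term). Everything else is just Fenchel's inequality plus the fundamental theorem of calculus.
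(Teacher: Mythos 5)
Your proof is correct: Fenchel's inequality applied with $p=d_xf$, combined with the subsolution inequality and integrated along an arbitrary closed curve (where the exact term telescopes to zero because $\gamma(a)=\gamma(b)$ and $b-a\in\Z$), yields $\int_\gamma(L+k)\ge 0$, and Proposition~\ref{basic}(2) then gives $k\ge c(L)$. The paper itself omits the proof of Lemma~\ref{menor}, deferring to~\cite{gafa}, and your argument is exactly the standard one intended there, with the relevant technical point (chain rule for a $C^1$ function along an absolutely continuous curve) correctly handled.
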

  
 \begin{lemma} 
 \label{dif}
 Let $k\geq c(L)$. If $f:M\times{\mS}^1\to\R$ is differentiable at 
 $(x,[t])\in M\times{\mS}^1$ and satisfies
 $$
 f(y,[t_2])-f(x,[t_1])\le\Phi_k (x,[t_1],y,[t_2])
 $$
 for all $y$ in a neighbourhood of $x$, then $H(x,d_{x}f)+d_tf\le k$.
 \end{lemma}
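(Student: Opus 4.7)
The plan is to test the domination-type hypothesis on short curves emanating from $(x,[t])$ with arbitrary prescribed initial velocity, and then use the differentiability of $f$ at $(x,[t])$ to pass to the limit.

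First, I fix an arbitrary $v\in T_xM$ and, for small $h>0$, choose a smooth curve $\gamma\colon[t,t+h]\to M$ with $\gamma(t)=x$ and $\dot\gamma(t)=v$ (in local coordinates, simply $\gamma(\tau)=x+(\tau-t)v$). Since $h<1$, the integer part of the duration is $0$, so $\gamma\in\mathcal{C}((x,[t]),(\gamma(t+h),[t+h]);0)$, and by the definitions of $\Phi_k^0$ and $\Phi_k$,
$$\Phi_k\bigl((x,[t]),(\gamma(t+h),[t+h])\bigr)\le\Phi_k^0\bigl((x,[t]),(\gamma(t+h),[t+h])\bigr)\le A_{L+k}(\gamma).$$
Taking $h$ small enough that $\gamma(t+h)$ lies in the neighbourhood of $x$ given by hypothesis, the domination inequality yields
$$f(\gamma(t+h),[t+h])-f(x,[t])\le\int_t^{t+h}\bigl(L(\gamma(\tau),\dot\gamma(\tau),\tau)+k\bigr)\,d\tau.$$

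Next I expand both sides as $h\to 0^+$. By differentiability of $f$ at $(x,[t])$, the left side equals $\bigl(d_xf(x,[t])\cdot v+d_tf(x,[t])\bigr)h+o(h)$; by continuity of $L$, the right side equals $\bigl(L(x,v,t)+k\bigr)h+o(h)$. Dividing by $h$ and letting $h\to 0^+$ gives
$$d_xf(x,[t])\cdot v+d_tf(x,[t])\le L(x,v,t)+k,$$
equivalently $d_xf(x,[t])\cdot v-L(x,v,t)\le k-d_tf(x,[t])$.

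Finally, since $v\in T_xM$ was arbitrary, I take the supremum over $v$ and use the definition of $H$ in \eqref{eq:hamiltonian} to conclude
$$H(x,d_xf(x,[t]),t)+d_tf(x,[t])\le k.$$
The only delicate point is checking that a short segment stays in the $n=0$ class so that the domination inequality applies directly via $A_{L+k}(\gamma)$; once this is observed, the rest is a standard one-sided directional derivative argument, and no further obstacle is expected. Note that $k\ge c(L)$ is used implicitly only to guarantee that $\Phi_k$ is finite (proposition~\ref{basic}) so that the inequality makes sense.
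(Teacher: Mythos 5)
Your proof is correct: testing the domination inequality along short curves $\gamma(\tau)=x+(\tau-t)v$ with arbitrary initial velocity $v$, expanding both sides to first order in $h$, and then taking the supremum over $v$ via the Legendre--Fenchel definition \eqref{eq:hamiltonian} is exactly the standard argument the paper has in mind (it omits the proof, referring to the same ideas as in~\cite{gafa}). Your remark that $k\ge c(L)$ only enters to make the hypothesis non-vacuous (finiteness of $\Phi_k$) is also accurate, since the step actually used is just $\Phi_k\le\Phi^0_k\le A_{L+k}(\gamma)$.
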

 
  \begin{proposition}
 For any $k>c(L)$ there 
 exists $f\in C^\infty(M\times{\mS}^1,\R)$ such that
 $H(x,d_xf,t)+d_tf<k$.                    \label{mayor}
 \end{proposition}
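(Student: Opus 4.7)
The plan is to produce a Lipschitz a.e.-subsolution of $H(x,d_xf,t) + d_tf \le k'$ for some intermediate $k' \in (c,k)$, and then to mollify it, absorbing the smoothing error in the strict gap $k-k' > 0$ by exploiting the convexity of $H$ in the momentum variable.

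For the Lipschitz step, fix $k' \in (c,k)$ and a basepoint $(x_0,[s_0])$, and set $f_0(x,[t]) := \Phi_{k'}((x_0,[s_0]),(x,[t]))$. Since $\Phi^n_{k'} = \Phi^n_0 + k'n$ and $\Phi^n_0 \ge -cn + O(1)$ (from Proposition~\ref{basic}(2) applied after closing curves), the quantities $\Phi^n_{k'}$ tend to $+\infty$ as $n \to \infty$, so the infimum defining $\Phi_{k'}$ is attained at a finite $n$ and $f_0$ is finite; a uniform upper bound comes from exhibiting explicit connecting curves on the compact manifold $M$. Concatenation of curves yields the domination $f_0 \prec L+k'$, and the minimiser $\ga$ realising the optimal $n$ is defined on a long time interval ending at $(x,[t])$. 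Its tail $\ga|_{[t-\de,t]}$ is itself a minimiser and satisfies $f_0(x,[t]) = f_0(\ga(t-\de),[t-\de]) + A_{L+k'}(\ga|_{[t-\de,t]})$, i.e.\ it is an exact backward realizer for $f_0$. With these two ingredients the proof of Lemma~\ref{wk-lip} goes through verbatim with $c$ replaced by $k'$, and shows $f_0$ is Lipschitz. Rademacher's theorem together with Lemma~\ref{dif} then gives $H(x,d_xf_0,t) + d_tf_0 \le k'$ almost everywhere.

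For the mollification step, embed $M \times \mS^1$ smoothly in some $\R^N$, extend $f_0$ to a tubular neighbourhood via the nearest-point projection, convolve with a standard Euclidean mollifier $\rho_\ep$, and restrict back; call the result $f_\ep \in C^\infty$. Then at each point $df_\ep$ is a convex combination of translates of $df_0$ weighted by $\rho_\ep$. Since $H$ is convex in $p$ (being the Legendre transform of the convex $L$), Jensen's inequality gives
\[
H\bigl(x, d_x f_\ep(x,t), t\bigr) \le \int H\bigl(x, d_x f_0(y,s), t\bigr)\,\rho_\ep(x-y,t-s)\,dy\,ds.
\]
Uniform continuity of $H$ on the compact set $M \times \mS^1 \times \{|p|\le \mathrm{Lip}(f_0)\}$ lets us replace $(x,t)$ by $(y,s)$ inside $H$ on the right-hand side at the cost of some $\omega(\ep) \to 0$. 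Adding $d_t f_\ep = (d_tf_0)*\rho_\ep$ yields $H(x,d_xf_\ep,t) + d_tf_\ep \le k' + \omega(\ep)$; choose $\ep$ so small that $\omega(\ep) < k-k'$.

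The principal technical hurdle is carrying out mollification intrinsically on the non-Euclidean space $M \times \mS^1$: the embed--extend--restrict device sidesteps coordinate-change bookkeeping, while the convexity of $H$ in the momentum is exactly what is needed to invoke Jensen fibrewise. Compactness of $M \times \mS^1$, together with the global Lipschitz bound on $f_0$, provides the uniform modulus of continuity $\omega$.
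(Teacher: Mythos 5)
Your mollification step is sound: convexity of $H$ in the momentum plus a uniform modulus of continuity on a compact set is exactly what is needed, and the strict gap $k-k'>0$ absorbs the smoothing error (modulo the routine extra terms coming from the nearest-point projection in the embed--extend--restrict device, which also have to be estimated). The genuine gap is in the first step: for a time periodic Lagrangian the function $f_0=\Phi_{k'}((x_0,[s_0]),\cdot)$ with $k'>c$ is \emph{not} Lipschitz; it is not even continuous at the base point. Indeed $\Phi_{k'}((x_0,[s_0]),(x_0,[s_0]))=0$ and $\Phi_{k'}((x_0,[s_0]),(x_0,[s_0+\e]))=O(\e)$, whereas any admissible curve from $(x_0,[s_0])$ to $(x_0,[s_0-\e])$ has duration $n+1-\e$ with $n\ge 0$; closing it with the constant curve at $x_0$ over a time $\e$ produces a closed curve, whose $(L+c)$-action is nonnegative by Proposition~\ref{basic}, so the original curve has $A_{L+k'}\ge (k'-c)(n+1)-O(\e)\ge (k'-c)-O(\e)$. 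Thus $f_0$ jumps by at least $k'-c>0$ across the time slice $[s_0]$ near $x_0$. This is exactly the periodic-case pathology the paper warns about (dominated functions and the action potential need not be continuous), and it is where your argument breaks: your assertion that the minimizer realizing the optimal $n$ ``is defined on a long time interval'' is unjustified --- for targets $(y,[s_0+\e])$ the optimal duration is the fractional part $\e<1$, so there is no backward realizer on an interval of length $\ge 1$, Lemma~\ref{le-mather} gives no a priori speed bound, and the proof of Lemma~\ref{wk-lip} (which requires $\de>1$) cannot be run ``verbatim with $c$ replaced by $k'$''. So your intermediate claim is not merely unproved; it is false.

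For comparison: the paper itself gives no proof of Proposition~\ref{mayor}, referring to the ideas of~\cite{gafa}, and in the autonomous setting of~\cite{gafa} your construction is the right one, precisely because there $\Phi_{k}(x_0,\cdot)$ is Lipschitz. In the periodic setting the repair is to start from a function the paper has already produced (earlier in the logical order, so there is no circularity): take $u=h_c((z,[\sigma]),\cdot)\in\cS^-$ from Lemma~\ref{hiswk}. It is Lipschitz by Lemma~\ref{wk-lip}, it is dominated, and hence Lemma~\ref{dif} gives $d_tu+H(x,d_xu,t)\le c$ at every point of differentiability, i.e.\ almost everywhere by Rademacher. Your own smoothing argument applied to $u$, with $k'=c$, then yields $f\in C^\infty(M\times\mS^1)$ with $d_tf+H(x,d_xf,t)\le c+\omega(\e)<k$ for $\e$ small, which proves the proposition.
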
 

 We give a proof of the following fact
  \begin{corollary}\label{Solk=c}
  If $u$ is a $C^{1+\text{\rm Lip}}$ global solution of the
  Hamilton-Jacobi equation
  $u_t+H(x,u_x,t)=k$, then $k=c(L)$ and $u$ is a weak KAM solution in
  ${\cS}^-\cap{\cS}^+$.
  \end{corollary}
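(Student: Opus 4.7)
My plan is to first establish domination (giving $k\ge c(L)$), then exhibit realizing curves via characteristics of the Hamilton-Jacobi equation, and finally produce a $\phi$-invariant probability on the characteristic graph whose $L$-action integrates to $-k$, forcing $c(L)\ge k$ and placing $u$ in $\cS^+\cap\cS^-$.

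\textbf{Domination.} For any absolutely continuous curve $\eta:[s,t]\to M$, the chain rule, Fenchel-Young inequality $p\cdot v-L\le H(x,p,\tau)$ and the identity $u_t+H=k$ give
\begin{align*}
u(\eta(t),[t])-u(\eta(s),[s])
&=\int_s^t \bigl(u_x\cdot\dot\eta+u_t\bigr)\,d\tau\\
&\le\int_s^t \bigl(L(\eta,\dot\eta,\tau)+H(\eta,u_x,\tau)+u_t\bigr)\,d\tau
=A_{L+k}(\eta).
\end{align*}
Taking infima yields $u(y,[t])-u(x,[s])\le\Phi_k((x,[s]),(y,[t]))$, so $u\prec L+k$. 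Specializing to closed curves and invoking Proposition~\ref{basic}(2) gives $k\ge c(L)$.

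\textbf{Realization along characteristics.} Since $u\in C^{1+\text{Lip}}$ and $H$ is smooth, the field $V(x,[t]):=H_p(x,u_x(x,[t]),t)$ is Lipschitz on the compact manifold $M\times\mS^1$, so the ODE $\dot\gamma=V(\gamma,[\cdot])$ has a unique global solution $\gamma_{x,s}:\R\to M$ with $\gamma_{x,s}(s)=x$. Along $\gamma:=\gamma_{x,s}$ the Fenchel inequality becomes equality and HJ gives $\tfrac{d}{d\tau}u(\gamma(\tau),[\tau])=u_x\dot\gamma+u_t=L(\gamma,\dot\gamma,\tau)+k$, so
\[u(\gamma(t),[t])-u(\gamma(s),[s])=A_{L+k}(\gamma|_{[s,t]})\quad\text{for all }s,t\in\R.\]
Combined with domination, each $\gamma_{x,s}$ minimizes the $(L+k)$-action on every subinterval, hence is a classical Euler-Lagrange solution.

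\textbf{Invariant measure and reverse inequality.} By the previous step and uniqueness of Euler-Lagrange solutions with prescribed initial data, the characteristic graph
\[\cG:=\{(x,V(x,[t]),[t]):(x,[t])\in M\times\mS^1\}\subset TM\times\mS^1\]
is invariant under the Euler-Lagrange flow $\phi$; since $u_x$ is bounded, $\cG$ is compact. Krylov-Bogolyubov then supplies a $\phi$-invariant probability $\mu$ with support in $\cG$. Setting $U(x,v,[t]):=u(x,[t])$, on $\cG$ the identity of the previous step reads
\[U\circ\phi_1-U=\int_0^1(L+k)\circ\phi_s\,ds.\]
Integrating against the $\phi$-invariant measure $\mu$, the left side vanishes and Fubini gives $\int(L+k)\,d\mu=0$, i.e.\ $\int L\,d\mu=-k$. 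Proposition~\ref{basic}(4) then yields $c(L)\ge k$, and together with the first step $k=c(L)$.

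\textbf{Conclusion and main difficulty.} With $k=c$ confirmed, the forward and backward restrictions of $\gamma_{x,s}$ realize $u$ at every $(x,[s])$, placing $u$ in $\cS^+\cap\cS^-$. The delicate point is the $\phi$-invariance of $\cG$ in the $C^{1+\text{Lip}}$ setting: the direct route, differentiating HJ in $x$ to show $p(t)=u_x(\gamma(t),[t])$ satisfies Hamilton's equation, needs $u\in C^2$; instead I route through \emph{characteristic $=$ minimizer $=$ Euler-Lagrange solution}, using the realization identity together with domination to promote each characteristic to a bona fide E-L orbit, so that uniqueness of E-L solutions keeps $\cG$ invariant.
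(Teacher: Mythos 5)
Your proposal is correct and follows essentially the same route as the paper: domination by integrating the Hamilton--Jacobi relation, realization along the Legendre-dual (characteristic) vector field with the minimizer-implies-Euler--Lagrange step handling the $C^{1+\text{\rm Lip}}$ regularity, and an invariant probability supported on the characteristic graph giving $\int(L+k)\,d\mu=0$, hence $k\le c(L)$. The only cosmetic differences are that you obtain $k\ge c(L)$ directly from domination on closed curves rather than citing Lemma~\ref{menor}, and you build the invariant measure by Krylov--Bogolyubov on the graph instead of lifting an invariant measure of the flow on $M\times\mS^1$.
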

  
  \begin{proof}
  By lemma~\ref{menor}, $k\ge c(L)$. Let
  ${\cL}_t(x,v)=L_v(x,v,t)$ be the conjugate 
  moment associated to $L$ and
  let $\xi(x,t)$ be the vector field defined by

   $\xi(x,t)={\cL}_t^{-1}(u_x)\in T_xM$.
   Then the vector field $(\xi,1)$ in $M\times{\mathbb S}^1$
   is Lipschitz.  Let $\rho_t$ be the flow of
   $(\xi,1)$ in $M\times{\mathbb S}^1$.
   From the Hamilton-Jacobi equation
   we get that
   \begin{equation}\label{diffdom}
   d_{(x,[t])}u\cdot (v,1) = u_x(x,t)\cdot v + u_t\cdot 1
   \le L(x,v,[t])+k.
   \end{equation}
   and that
   \begin{equation}\label{OnVectF}
   d_{(x,[t])}u\cdot(\xi(x,t),1)=L(x,\xi(x,t),t)+k
   \qquad\text{for all }(x,[t])\in M\times{\mathbb S}^1.
   \end{equation}
   Integrating equation~\eqref{diffdom} along absolutely continuous
   curves $(\ga(t),[t])$
   in $M\times{\mathbb S}^1$ from $(x,[s])$ to $(y,[t])$, we get that
   $$
   u(y,[t])-u(x,[s])\le\inf_\ga \oint_\ga (L+k)
   =\Phi_k\big((x,[s]),(y,[t])\big).
   $$ 
   So that $u\prec L+k$. 
   
   Also, integrating equation~\eqref{OnVectF},
   we get that the orbits of $\rho_t$ realize $u$ in the sense
   of the definition of a weak KAM solution.  
   In particular, the orbits of $\rho$ are global minimizers
   of the $(L+k)$-action, and thus they are solutions of the
   Euler-Lagrange equation. 
   
   It remains to prove that $k=c(L)$.
   Let $\nu$ be an invariant 
   Borel probability for $\rho_t$ and let $\mu$ be its lift to
   $TM\times{\mathbb S}^1$ using the vectorfield $\xi$. 
   Then $\mu$ is an invariant probability
   of the Lagrangian flow and, by equation~\eqref{OnVectF},
   $$
   \int(L+k)\;d\mu=\int du\;\; d\mu = 0.
   $$
   This implies that $k\le c(L)$.
   Thus $k=c(L)$ and also $\mu$ is a minimizing measure.
  \end{proof}

 \section{Weak KAM solutions}\label{weak KAM}
 
 \noindent{\bf Proof of theorem~\ref{propiedades}:}
 
 We first prove  item~1.
  By lemma~\ref{wk-lip}  we have   that 
  $u$ is Lipschitz and hence it is differentiable almost everywhere. Let
  $(x,[t])$ be a point of differentiability, then by lemma~\ref{dif} we have
 $$H(x,d_xu,t)+u_t \le c.$$
 Moreover let $\ga :[t,\infty)\to M$ be such that 
 $$
 u(\ga(s),[s])-u(x,[t])=A_{L+c}(\ga|_{[t,s]}),
 $$
 $$
 \lim_{s\to t} \frac{u(\ga(s),[s])-u(x,[t])}{s-t}
 = \lim_{s\to t} \frac{1}{s-t}
 \int_{t}^s(L+c)(\ga(s),\dot\ga(s),[s]) ds,
 $$
 so
 $$d_{x}u(x,[t])\dot\ga +d_tu(x,[t])= L(x,\dot\ga,t)+c.$$
 Therefore
 $$c=d_{x}u\,\dot\ga-L+d_tu \le H(x,d_{x}u,t)+d_tu\le c.$$
 
 So $u$ is a solution of the Hamilton Jacobi Equation and $d_{x}u$ and 
 $\dot\ga$ are related by the Legendre transformation of $L$.
 
 \qed

 \noindent{\bf Proof of the Graph Property:}

 We need the following lemma due to Mather, a proof of which can be
 found in~\cite{M}.
 
 \begin{lemma}
  \label{L:MATHER2}
   Given $A>0$  there exists
   $K>0$  $\e_1>0$ and  $\de>0$ with the following property: if
  $\lv v_i\rv <A$,
  $(p_i,v_i,[t_i])\in TM\times{\mS}^1$, $i=1,2$ satisfy
   $d((p_1,[t_1]),(p_2,[t_2]))<\de$ and
  $d((p_1,v_1,[t_1]),(p_2,v_2,[t_2]))\ge K^{-1}d((p_1,[t_1]),(p_2,[t_2]))$
   then, if  $a\in\re$ and
  $x_i:\re\to M$, $i=1,2$, are the solutions of $L$ with $x_i(t_i)=p_i$,
  $\dot{x}_i(p_i)=v_i$, there exist solutions $\ga_i:[t_i-\e,t_i+\e]\to M$
  of $L$ with $0<\e<\e_1$, satisfying
  \begin{align*}
  \ga_1(t_1-\e)=&x_1(t_1-\e) \quad , \quad \ga_1(t_2+\e)=x_2(t_2+\e) \, ,\\
  \ga_2(t_2-\e)=&x_2(t_2-\e) \quad , \quad \ga_2(t_1+\e)=x_1(t_1+\e) \, , \\
  S_L(x_1\vert_{[t_1-\e,t_2+\e]}&)
  +S_L(x_2\vert_{[t_2-\e,t_1+\e]})>S_L(\ga_1)+S_L(\ga_2)
  \end{align*}
  \end{lemma}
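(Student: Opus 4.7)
The plan is to invoke the strict convexity of $L$ in the velocity variable via Mather's ``crossing'' construction: when two Euler-Lagrange solutions pass through positions that are close but have jets which are not too close in $TM\times\mS^1$, one can interpolate between them to produce two curves with strictly smaller total action. The quantitative hypothesis relating the jet and position distances by the factor $K^{-1}$ is precisely what guarantees that the quadratic convexity gain (which is controlled by $|v_1-v_2|^2$) dominates the linear-in-$\de$ error terms coming from the mismatch of basepoints and times.

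First I would localize. The velocity bound $|v_i|\le A$ and continuous dependence on initial conditions imply that, once $\e\le\e_1$ and $d((p_1,[t_1]),(p_2,[t_2]))<\de$ are both small, the four arcs $x_i\vert_{[t_i-\e,t_i+\e]}$ all lie in a single coordinate chart around $p_1$ in $M$. Assume without loss of generality $t_1\le t_2$. I would build $\ga_1,\ga_2$ in two stages: first define $C^0$ candidates with the prescribed endpoints by a smooth interpolation, then replace them by the locally minimizing Euler-Lagrange solutions with the same endpoints on the same time-domains, which can only decrease the action.

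For the $C^0$ stage I would use Mather's ``doubling'' trick. Choose a smooth function $\psi:\R\to[-1,1]$ equal to $+1$ near the left endpoint of the interval and $-1$ near the right endpoint, with a smooth transition of width $\e$, and set (in the chart)
\[
\ga_1(s)=\tfrac12(x_1(s)+x_2(s))+\tfrac12(x_1(s)-x_2(s))\,\psi(s),\quad \ga_2(s)=\tfrac12(x_1(s)+x_2(s))-\tfrac12(x_1(s)-x_2(s))\,\psi(s).
\]
By construction $\ga_1+\ga_2=x_1+x_2$ and $\dot\ga_1+\dot\ga_2=\dot x_1+\dot x_2$ pointwise, so Taylor expansion of $L$ around the midpoint curve $\tfrac12(x_1+x_2)$ yields, after the linear terms cancel in pairs,
\[
L(x_1,\dot x_1,s)+L(x_2,\dot x_2,s)-L(\ga_1,\dot\ga_1,s)-L(\ga_2,\dot\ga_2,s)=\tfrac14(1-\psi^2)(\dot x_1-\dot x_2)^{T}L_{vv}(\dot x_1-\dot x_2)+R(s),
\]
where $R(s)$ collects cross terms involving $\dot\psi\cdot(x_1-x_2)$ and higher-order Taylor remainders. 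Integrating and using the uniform lower bound on $L_{vv}$ on $\{|v|\le A+1\}\times M\times\mS^1$, the main term is of order $\e\,|v_1-v_2|^2$, while $R$ is controlled using the relation between position and jet distances supplied by the hypothesis.

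The main obstacle is precisely the careful bookkeeping of $R$: the $\dot\psi$-term contributes an error of order $|x_1-x_2|^2/\e$, which must be shown to be strictly smaller than the convexity gain. Balancing these is what forces the relation between $K$, $\e_1$ and $\de$ and selects the ``right'' transition width $\e$; the hypothesis $d(\text{jets})\ge K^{-1}d(\text{positions})$ is exactly what enables this balance. A secondary subtlety is the $t_1\ne t_2$ asymmetry, which requires a small time-reparametrization so that both integrals run over intervals of the same length; the cost of this reparametrization is of order $|t_1-t_2|$ and is absorbed in the convexity gain, yielding the desired strict inequality on total actions.
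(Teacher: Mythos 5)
The paper itself does not prove this lemma: it is quoted as Mather's crossing lemma and the proof is delegated to~\cite{M}, so there is no argument in the paper to compare yours with, and your sketch has to stand on its own. Its overall strategy is the right one and is essentially Mather's: interpolate between $x_1$ and $x_2$ with a cut-off $\psi$ to build curves with the switched endpoints, extract a gain of order $\e\,|v_1-v_2|^2$ from the uniform convexity of $L$ in $v$, pay a price of order $|x_1-x_2|^2/\e$ from the $\dot\psi$ term, balance the two using the hypothesis relating the two distances, and finally replace the interpolants by Euler--Lagrange minimizers with the same endpoints, which only decreases the action.

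However, the two steps you yourself identify as delicate are exactly where the sketch fails as written. First, the second-order Taylor expansion of $L$ about the midpoint curve, with a ``higher-order remainder'', is legitimate only when both $|x_1-x_2|$ and $|\dot x_1-\dot x_2|$ are small; the hypotheses make the base points $\de$-close but allow $|v_1-v_2|$ to be of order $A$. The remainder then contains a term of order $|v_1-v_2|^3$, which is neither dominated by the quadratic gain $\e\,\lambda\,|v_1-v_2|^2$ nor controlled by the hypothesis on the ratio of distances. The standard repair is to use convexity only in the fibre variable --- the bound $L_{vv}\ge\lambda$ on $\{|v|\le A+1\}$ gives $L(y,a,s)+L(y,b,s)\ge 2L\bigl(y,\tfrac{a+b}{2},s\bigr)+\tfrac{\lambda}{4}|a-b|^2$ with no smallness assumption on $|a-b|$ --- and to handle the discrepancy in the base point and in time only through Lipschitz bounds for $L$, $L_x$, $L_v$ on the compact region (or else to treat the case $|v_1-v_2|$ bounded below separately). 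Second, your treatment of $t_1\neq t_2$ is off: the lemma compares $x_1$ with $\ga_1$ over the \emph{same} interval $[t_1-\e,t_2+\e]$ and $x_2$ with $\ga_2$ over the \emph{same} interval $[t_2-\e,t_1+\e]$, so no reparametrization is needed at all --- your interpolation formula already makes sense on these intervals once the transition of $\psi$ is placed in their overlap --- and the fix you propose, a reparametrization whose cost is ``of order $|t_1-t_2|$'' and ``absorbed in the convexity gain'', cannot work quantitatively: the gain is quadratic in quantities that the hypothesis bounds below only by a multiple of $d((p_1,[t_1]),(p_2,[t_2]))\ge|t_1-t_2|$, and a linear error beats a quadratic gain as this distance tends to $0$. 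One must instead show that the contributions which are genuinely linear in $t_2-t_1$ either cancel between the two switched curves or come multiplied by $|x_1-x_2|+|v_1-v_2|+|t_1-t_2|$ (these arise from the non-overlapping time intervals), and this bookkeeping --- which is the heart of the time-periodic case --- is exactly what your sketch waves away.
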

 We now prove the graph property. Let $(p_1,v_1,[t_1])$,~$(p_2,v_2,[t_2])
   \in\Ga^+(u)$
   and suppose that $K\,d((v_1,[t_1]),(v_2,[t_2]))>
   d((p_1,[t_1]),(p_2,[t_2]))$, where $K$ is
   from lemma~\ref{L:MATHER2} and the $A$ that we input 
   on lemma~\ref{L:MATHER2} is from
   lemma~\ref{le-mather}. Let $y^+_i=x_i(t_i+\e)$, $i=1,2$,
   and $y^-_i=x_i(t_i-\e)$ for $\e$ small,  then
 \begin{equation}
     \label{eq:diag1}
     u(y^+_1,[t_1+\e])-u(y^-_1,[t_1-\e])=
 \Phi_c((y^-_1,[t_1-\e]),(y^+_1,[t_1+\e]))
   \end{equation}
 \begin{equation}
     \label{eq:diag2}
    u(y^+_2,[t_2+\e])-u(y^-_2,[t_2-\e])=
 \Phi_c((y^-_2,[t_2-\e]),(y^+_2,[t_2+\e])) 
   \end{equation}
   Then using that $u\prec L+c$ and  lemma~\ref{L:MATHER2} , we get that
   \begin{multline}
     \label{eq:diag3}
     u(y^+_2,[t_2+\e])-u(y^-_1,[t_1-\e])+
     u(y^+_1,[t_1+\e])-u(y^-_2,[t_2-\e]) \\ 
 \le S_{L+c}(\ga_1)+S_{L+c}(\ga_2)\\
 <S_{L+c}(x_1\vert_{[t_1-\e,t_2+\e]}
  +S_{L+c}(x_2\vert_{[t_2-\e,t_1+\e]})\\
 =\Phi_c((y^-_1,[t_1-\e]),(y^+_1,[t_1+\e])+
 \Phi_c((y^-_2,[t_2-\e]),(y^+_2,[t_2+\e]). 
   \end{multline}
 
 Which is a contradiction with the sum of ~\eqref{eq:diag1} and
 ~\eqref{eq:diag2}. 
 \qed

  \bigskip

 \noindent{\bf Proof of item~3:}
 
 Let $(x,[s])$ in $\pi \Gamma^+(u)$, let 
 $(\sigma (\tau), [\alpha(\tau)])$
 be a curve on $M\times \mS^1$ with $\sigma (0)=x, \alpha (0)=s $. Let
 $\ga_s$ be the curve such that 
 
 $$u(\ga_s(t),[t])-
 u(\ga_s(s-\delta),[s-\delta])= A_{L+c}(\ga_s|_{[s-\delta,t]}).$$
 
 Since $(x,[s])$ is  in $\pi \Gamma^+(u)$ we can make a backwards
 variation $(\ga_\tau)$ of the solution $\ga_s$. That is,
 $ \ga_{\tau}:[s-\delta, \alpha(\tau)]\to M$ is a  solutions of
 the Euler -Lagrange equation joining the points $p=\ga_s(s-\delta)$ and
 $\sigma (\tau)$.

 Since $u $ is dominated we have
 
 \begin{eqnarray*}
   u(\sigma (\tau), [\alpha (\tau)])-u(x,[s]) & = & u(\sigma (\tau), [\alpha
 (\tau)])-u(p,[s-\delta])-(u(x,[s])-
 u(p,[s-\delta])) \\
  & \le &  A_{L+c}(\ga_{\tau}|_{[s-\delta,\alpha (\tau)]})-
  A_{L+c}(\ga_s|_{[s-\delta,s]}) \\
  & = & A_{L+c}(\ga_{\tau}|_{[s-\delta,s]})-
  A_{L+c}(\ga_s|_{[s-\delta,s]}) +A_{L+c}(\ga_{\tau}|_{[s,\alpha (\tau)]})
 \end{eqnarray*}

 Dividing by $\tau-s$ and taking limits as $\tau $ tends to $s$ and
 using the fact that $\ga_\tau$ is a solution of the Euler-Lagrange 
 equation, we obtain 
 
 $$\limsup_{\tau \to s} \dfrac{u(\sigma (\tau), [\alpha
   (\tau)])-u(x,[s])}
 {\tau-s}\le L_v(\dot \ga_s,s)\cdot\sigma'(0)+ L+c(\dot\ga_s,s)\alpha'(0)$$
 
 Similarly we can make a forward variation to get 
 
 $$\liminf_{\tau \to s} \dfrac{u(\sigma (\tau), [\alpha (\tau)])-u(x,[s])}
 {\tau-s}\ge L_v(\dot \ga_s, s)\cdot \sigma'(0)+ L+c(\dot \ga_s, s)\alpha'(0)$$
 
 \qed

 \noindent{\bf Proof of theorem~\ref{gonzalo}:}

  Let $u\in\cS^-$, since $u$ is dominated, then
  \begin{equation}\label{u<uh}
  u(x,[t])\le\min\limits_{(y,[r])}u(y,[r])+\Phi_c((y,[r]),(x,[t])).
  \end{equation}
  Let $\ga:]-\infty,t]\to M$ be such that for all $s\le t$,
  $$
  u(x,[t])-u(\ga(s),[s])=A_{L+c}\bigl(\ga\vert_{[s,t]}\bigr).
  $$
  Then $\ga(s)$ is semistatic and the minimum in~\eqref{u<uh}
  is realized at every point $(\ga(s),[s])$ with $s< t$.
  Choose a convergent sequence $(\ga(s_n),[s_n])\to (p,[\tau])\in
  M\times {\Bbb S}^1$, with $s_n\to-\infty$. Then by  lemma~\ref{AlSemi}
  below,
  $(p,[\tau])$ is in the Pierls set. Therefore, using the continuity
  of $\Phi_c$ at $(p,[\tau])$ (see lemma~\ref{PhiContP} below)
  and~\eqref{u<uh}, we have that
  \begin{align}
  u(x,[t])&=u(p,[\tau])+\Phi_c((p,[\tau]),(x,[t]))
          \notag\\
          &=\min_{(q,[\sigma])\in\cA}u(q,[\sigma])+\Phi_c((q,[\sigma]),(x,[t])).
          \label{u=uP}
  \end{align}

  We show now that it is enough to choose one point on each 
  static class to achieve the minimum on~\eqref{u=uP}.
  Suppose that $(p,[\tau])$ and $(q,[\sigma])$ are in the same
  static class. Then
  \begin{align*}
  \Phi_c((q,[\sigma]),(x,[t]))
  &\le\Phi_c((q,[\sigma]),(p,[\tau]))+\Phi_c((p,[\tau]),(x,[t]))\\
 &\le\Phi_c((q,[\sigma]),(p,[\tau]))+\Phi_c((p,[\tau]),(q,[\sigma]))
      +\Phi_c((q,[\sigma]),(x,[t]))\\
  &=\Phi_c((q,[\sigma]),(x,[t])).
  \end{align*}
  So that
  $\Phi_c((q,[\sigma]),(x,[t]))=\Phi_c((q,[\sigma]),(p,[\tau]))+\Phi_c((p,[\tau]),(x,[t]))$.
  Moreover,
  \begin{align*}
  u(p,[\tau]) &\le u(q,[\sigma]))+\Phi_c((q,[\sigma],p,[\tau])) \\
  &\le u(p,[\tau])+\Phi_c((p,[\tau]),(q,[\sigma]))+\Phi_c((q,[\sigma]),(p,[\tau]))\\
  &=u(p,[\tau]).
  \end{align*}
  So that $u(q,[\sigma])+\Phi_c((q,[\sigma]),(p,[\tau]))=u(p,[\tau])$. Thus
  \begin{align*}
  u(q,[\sigma])+\Phi_c((q,[\sigma]),(x,[t]))
   &= u(q,[\sigma])+\Phi_c((q,[\sigma]),(p,[\tau]))+\Phi_c((p,[\tau]),(x,[t])) \\
   &= u(p,[\tau])+\Phi_c((p,[\tau]),(x,[t])).
  \end{align*}
  So that $u=u_f$, with $f=u\vert_\AA$.
  
  Observe that by definition, if $f:\AA\to\re$ is dominated,
  then $u_f\vert_\AA\equiv f$. This implies that
  the map $\{f\text{ dominated}\}\mapsto u_f$ is injective.
  
  Finally, it remains to prove that if $f:\AA\to\re$ then
  $u_f\in\cS^-$. This follows from lemma~\ref{hiswk} 
  and lemma~\ref{minwk} below.
  \qed

  \begin{lemma}\label{AlSemi}
  If $\ga:]-\infty,t_0]\to M$ is semistatic and $s_n\to-\infty$
  is such that 
  $\lim_n\big(\ga(s_n),[s_n] \big)=(p,[\tau])$ exists.
  Then $(p,[\tau])$ is in the Aubry set.
  \end{lemma}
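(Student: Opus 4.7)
Our goal is to show that $h_c((p,[\tau]),(p,[\tau]))=0$, which is the defining condition for $(p,[\tau])$ to lie in $\cA$.

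The lower bound $h_c((p,[\tau]),(p,[\tau]))\ge 0$ is immediate from item~2 of Proposition~\ref{basic}: for every integer $N$, $\Phi^N_c((p,[\tau]),(p,[\tau]))$ is the minimum $(L+c)$-action over closed curves of integer length $N$ based at $(p,[\tau])$, which is nonnegative since closed curves have nonnegative $(L+c)$-action. Taking $\liminf_N$ gives the bound.

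For the upper bound, I construct a sequence of closed curves $\beta_k$ at $(p,[\tau])$ of integer length $N_k\to\infty$ with $A_{L+c}(\beta_k)\to 0$, built as concatenations of long semistatic pieces of $\ga$ with short closing arcs. The key mechanism for making the long piece cheap is a telescoping argument: setting $F(n):=A_{L+c}(\ga|_{[s_n,t_0]})$, semistaticity gives $F(n)=\Phi_c((\ga(s_n),[s_n]),(\ga(t_0),[t_0]))$, and combining $\Phi_c\le h_c$ (with $h_c$ Lipschitz and bounded by Proposition~\ref{barrera}) with pointwise finiteness of $\Phi_c$ shows $F$ is bounded. Passing to a subsequence with $F(n_k)\to F_\infty$ and, by a diagonal extraction, to indices $m_k>n_k$ with $F(m_k)\to F_\infty$ and $(\ga(s_{m_k}),[s_{m_k}])\to(p,[\tau])$, the identity
\[
A_{L+c}(\ga|_{[s_{m_k},s_{n_k}]})=F(m_k)-F(n_k)
\]
yields a sequence of semistatic segments, based at points converging to $(p,[\tau])$, whose actions tend to zero.

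I then close each segment into a loop at $(p,[\tau])$ by prepending a short curve $\alpha_k$ from $(p,[\tau])$ to $(\ga(s_{m_k}),[s_{m_k}])$ and appending a short curve $\alpha'_k$ from $(\ga(s_{n_k}),[s_{n_k}])$ back to $(p,[\tau])$, the durations being chosen so that the total length of $\beta_k=\alpha_k\cdot\ga|_{[s_{m_k},s_{n_k}]}\cdot\alpha'_k$ is an integer $N_k$, respecting the $\mS^1$-gluing at the joins. Invoking Mather's velocity bound (Lemma~\ref{le-mather}) and passing to a further subsequence that controls the rates at which $d(p,\ga(s_{n_k}))$ and $|[s_{n_k}]-[\tau]|$ decay relative to one another, I choose $\alpha_k,\alpha'_k$ as short geodesic arcs with $A_{L+c}(\alpha_k),A_{L+c}(\alpha'_k)\to 0$. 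This yields $\Phi^{N_k}_c((p,[\tau]),(p,[\tau]))\le A_{L+c}(\beta_k)\to 0$ and hence $h_c((p,[\tau]),(p,[\tau]))\le 0$.

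The principal technical obstacle is the close-up step: the duration of each $\alpha_k,\alpha'_k$ must be positive and lie in a residue class mod~$1$ dictated by $[s_{n_k}]-[\tau]$, so making the connector actions vanish requires balancing the spatial proximity $d(p,\ga(s_{n_k}))\to 0$ against the temporal proximity $|[s_{n_k}]-[\tau]|\to 0$ by a further subsequence extraction; Mather's uniform speed bound on $\ga$ is what prevents superlinear blow-up of the geodesic closure.
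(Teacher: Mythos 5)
The skeleton of your argument is sound and, for the middle piece, genuinely different from the paper's. Both proofs reduce the lemma to $h_c\big((p,[\tau]),(p,[\tau])\big)=0$, get the lower bound from nonnegativity of the $(L+c)$-action of closed curves, and build closing loops of the form (connector)$\,\cdot\,\ga|_{[s_{m_k},s_{n_k}]}\cdot$(connector). Your telescoping device — $F(n)=A_{L+c}(\ga|_{[s_n,t_0]})=\Phi_c\big((\ga(s_n),[s_n]),(\ga(t_0),[t_0])\big)$ is bounded (for the lower bound you should say why: concatenating $\ga|_{[s_n,t_0]}$ with any return curve gives a closed curve, so $F(n)\ge-h_c\big((\ga(t_0),[t_0]),(\ga(s_n),[s_n])\big)\ge-\max h_c$; "pointwise finiteness of $\Phi_c$" alone is not a uniform bound), hence along a subsequence with $m_k\gg n_k$ one gets $A_{L+c}(\ga|_{[s_{m_k},s_{n_k}]})=F(m_k)-F(n_k)\to0$ — is correct and is a clean alternative to the paper, which instead uses semistaticity to rewrite the middle action as $\Phi_c$ between its two endpoints (both within $O(\e)$ of $p$) and bounds that $\Phi_c$ by a short comparison geodesic, obtaining $O(\e)$ rather than $o(1)$.

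The genuine gap is the close-up step. Your connector from $(p,[\tau])$ to $(\ga(s_{m_k}),[s_{m_k}])$ must have positive duration congruent to $s_{m_k}-\tau$ mod $1$, and convergence in $M\times\mS^1$ gives no control of the ratio between $d(p,\ga(s_{m_k}))$ and that time offset. If the offset tends to $0$ from the "wrong side" (or is exactly $0$, e.g. $s_n\in\tau+\Z$ with $\ga(s_n)\ne p$), the least admissible duration is close to (or equal to) a full period, and the action of such a connector is close to $\int_0^1\big(L(p,0,t)+c\big)dt$, which need not be small; if the offset $\theta_k\to0^+$ but $d(p,\ga(s_{m_k}))$ decays more slowly (say like $\theta_k^{1/3}$ for $L=\tfrac12|v|^2$), superlinearity makes the connector action of order $\theta_k^{-1/3}$, which does not vanish. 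No further subsequence extraction can repair this, because the relative decay rates are fixed by the given sequence, and Mather's bound controls $\Vert\dot\ga\Vert$, not the relation between $d(p,\ga(s_n))$ and $[s_n]-[\tau]$. The paper's way around this is precisely where Mather's bound enters: fix $\e>0$ and connect $(p,[\tau])$ to $\big(\ga(s_N+\e),[s_N+\e]\big)$ (and $\big(\ga(s_n-\e),[s_n-\e]\big)$ back to $(p,[\tau])$) for all large $n,N$; the connector duration then lies in $[\tfrac\e2,\tfrac{3\e}2]$ while the distance is $O(\e)$ because $\Vert\dot\ga\Vert$ is bounded, so the connector speed is $O(1)$ and its action is $O(\e)$. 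With this fixed-$\e$ offset (the trimmed middle piece is still handled by your telescoping, up to two segments of action $O(\e)$), one gets $h_c\big((p,[\tau]),(p,[\tau])\big)\le C\e$ and concludes by letting $\e\to0$; as written, your proposal does not contain this idea, and the asserted $A_{L+c}(\alpha_k)\to0$ can fail.
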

  
  \begin{proof}
  Let $\e>0$ be small. Chose $n_0>0$ such that for $n>n_0$, we have
  $$
  |s_n-\tau\mod1|<\tfrac\e2
  \qquad,\qquad
  d(\ga(s_n),p)<\tfrac\e2.
  $$
  Let $\la^-_n:[\tau,s_n+\e\mod1]\to M$ be a minimizer with 
  $\la^-_n(\tau)=p$, $\la^-_n(s_n+\e\mod1)=\ga(s_n+\e)$.
  By lemma~\ref{le-mather}, $\lV \dot\gamma\rV$ is uniformly
  bounded. By the same argument, using the first variation
  formula, as in proposition~\ref{barrera}.c,
  $$
  A_{L+c}(\la^-_n)\le K_1\,\big[\,
  d\big(\ga(s_n),p\big)+|s_n+\e-\tau\mod1|\,\big]
  \le 3\,\e\, K_1.
  $$
  Let $\la^+_n:[s_n-\e \mod 1,\tau]\to M$ be a minimizer with
  $\la^+_n(s_n-\e)=\ga(s_n-\e)$, $\la^+_n(\tau)=p$.
  Similarly,
  $$
  A_{L+c}(\la^+_n)\le 3\,\e\;K_1.
  $$
  We have that
  \begin{align}
  h_c\big((p,[\tau]),(p,[\tau])\big)
  &\le\liminf_{N\to\infty} A_{L+c}(\la^-_N)
    +A_{L+c}\big(\ga|_{[s_N+\e,s_n-\e]}\big)
    +A_{L+c}(\la^+_n)
    \notag\\
  &\le 6\,\e\,K_1
  +\liminf_N A_{L+c}\big(\ga|_{[s_N+\e,s_n-\e]}\big).
  \label{AlSemi.1}
  \end{align}
  
  Adding the action of $\ga$ on the intervals with
  endpoints $s_N-\e<s_N+\e<s_n-\e<s_n+\e$
  and using that $\ga$ is semistatic on 
  $[s_N-\e,s_n+\e]$, we have that
  \begin{align}\label{AlSemi.2}
  A_{L+c}\big(\ga|_{[s_N+\e,s_n-\e]}\big)
  =\Phi_c&\big((\ga(s_N-\e),s_N-\e)\,,\,(\ga(s_n+\e),s_n+\e)\big)
  \notag\\
  &-A_{L+c}\big(\ga|_{[s_N-\e,s_N+\e]}\big)
  -A_{L+c}\big(\ga|_{[s_n-\e,s_n+\e]}\big).
  \end{align}
  Comparing $\Phi_c$ with the action of a minimal length
  geodesic, parameterized by the {\it small} interval
  $I=[s_N-\e \mod1,\,s_n+\e \mod1]$
  of length $\e\le\ell(I)\le3\e$, with
  \begin{align*}
  \text{speed} 
  &\le \tfrac1\e\;d\big(\ga(s_N-\e),\ga(s_n+\e)\big)
  \le \tfrac 1\e\;\big[\,\e\,\lV\dot\ga\rV
  +d\big(\ga(s_N),\ga(s_n)\big)+\e\,\lV\dot\ga\rV\,\big]
  \\
  &\le 2\,\lV\dot\ga\rV+1;
  \end{align*}
  we have that
  $$
  \Phi_c\big((\ga(s_N-\e),s_N-\e)\,,\,
  (\ga(s_n+\e),s_n+\e)\big)
  \le\ell(I)\;\big[\,\max_{|v|\le 2\,\lV\dot\ga\rV+1}
  L+c\,\big]
  \le 3\,\e\,K_2.
  $$
  The two actions in~\eqref{AlSemi.2} are bounded by
  $2\,(2\e\cdot K_2)$. Thus, from~\eqref{AlSemi.2},
  $$
  A_{L+c}\big(\ga|_{[s_N+\e,s_n-\e]}\big)\le 7\,\e\,K_2.
  $$
  From~\eqref{AlSemi.1},
  $$
  0\le h_c\big((p,[\tau]),(p,[\tau])\big)
  \le 6\,\e\,K_1+7\,\e\,K_2.
  $$
  Now let $\e\to0$.
  \end{proof}

  \bigskip
  
   \begin{lemma}\label{PhiContP}
  If $\lim_n(y_n,[s_n])=(p,[\tau])\in\cA$ then for all 
  $(x,[t])\in M\times{\Bbb S}^1$,
  \begin{align*}
  \lim_n\Phi_c\big((y_n,[s_n]),(x,[t])\big)
  =\Phi_c\big((p,[\tau]),(x,[t])\big)
  =h_c\big((p,[\tau]),(x,[t])\big).
  \end{align*}
  \end{lemma}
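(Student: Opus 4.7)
The plan is first to note that $\Phi_c((p,[\tau]),(x,[t]))=h_c((p,[\tau]),(x,[t]))$ is immediate from Remark~\ref{Phi=h}, so the only content of the lemma is the convergence
$$\lim_n\Phi_c((y_n,[s_n]),(x,[t]))=\Phi_c((p,[\tau]),(x,[t])).$$
I would sandwich $\Phi_c((y_n,[s_n]),(x,[t]))$ between two quantities that both tend to this common value, using only the mixed triangle inequality of Proposition~\ref{barrera}(4), the comparison $\Phi_c\le h_c$, the Lipschitz continuity of $h_c$ (Proposition~\ref{barrera}(5)), and the fact that $h_c((p,[\tau]),(p,[\tau]))=0$ by the definition of the Aubry set $\cA$.

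For the upper bound I would apply Proposition~\ref{barrera}(4) with middle point $(p,[\tau])$ and then use $\Phi_c\le h_c$ to get
$$\Phi_c((y_n,[s_n]),(x,[t]))\le h_c((y_n,[s_n]),(x,[t]))\le h_c((y_n,[s_n]),(p,[\tau]))+\Phi_c((p,[\tau]),(x,[t])).$$
The Lipschitz continuity of $h_c$ combined with $h_c((p,[\tau]),(p,[\tau]))=0$ forces $h_c((y_n,[s_n]),(p,[\tau]))\to 0$, which yields $\limsup_n\Phi_c((y_n,[s_n]),(x,[t]))\le\Phi_c((p,[\tau]),(x,[t]))$.

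For the matching lower bound I would apply Proposition~\ref{barrera}(4) in the opposite direction, with middle point $(y_n,[s_n])$:
$$h_c((p,[\tau]),(x,[t]))\le h_c((p,[\tau]),(y_n,[s_n]))+\Phi_c((y_n,[s_n]),(x,[t])).$$
The left-hand side equals $\Phi_c((p,[\tau]),(x,[t]))$ by Remark~\ref{Phi=h}, while $h_c((p,[\tau]),(y_n,[s_n]))\to 0$ by the same Lipschitz-plus-Aubry argument. Rearranging gives $\liminf_n\Phi_c((y_n,[s_n]),(x,[t]))\ge\Phi_c((p,[\tau]),(x,[t]))$, which combined with the upper bound finishes the proof. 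There is no serious obstacle; the only point that requires care is that the paper warns $\Phi_c$ itself may fail to satisfy a triangle inequality, so the two sandwich estimates must be routed through the mixed inequality in Proposition~\ref{barrera}(4) rather than through an unavailable triangle inequality for $\Phi_c$.
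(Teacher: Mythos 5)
Your proposal is correct and is essentially the paper's own argument: the paper strings the same estimates (Remark~\ref{Phi=h}, Proposition~\ref{barrera}(4) with each of $(y_n,[s_n])$ and $(p,[\tau])$ as the intermediate point, $\Phi_c\le h_c$, and continuity of $h_c$ together with $h_c((p,[\tau]),(p,[\tau]))=0$) into one chain of inequalities and passes to the limit, which is just a compressed form of your two sandwich bounds. No differences of substance.
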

  
  \begin{proof}
  
  Recall that by remark~\ref{Phi=h}, 
  $h_c((p,[\tau]),(x,[t]))=\Phi_c((p,[\tau]),(x,[t]))$.
  By item~4 of proposition~\ref{barrera},
  \begin{align}
  \Phi_c((p,[\tau]),(x,[t]))
    &=h_c((p,[\tau]),(x,[t])) \notag\\
    &\le h_c((p,[\tau]),(y_n,[s_n])) + \Phi_c((y_n,[s_n]),(x,[t]))
    \label{Phi=h.1}\\
    &\le h_c((p,[\tau]),(y_n,[s_n])) + h_c((y_n,[s_n]),(x,[t]))
    \notag\\
    &\le h_c((p,[\tau]),(y_n,[s_n])) + h_c((y_n,[s_n]),(p,[\tau]))
     + \Phi_c((p,[\tau]),(x,[t]))
     \label{Phi=h.2}
  \end{align}
  Using that $h_c$ is continuous, taking $\lim_n$ on inequalities
  ~\eqref{Phi=h.1} and~\eqref{Phi=h.2}, we get that
  $\lim_n\Phi_c((y_n,[s_n]),(x,[t]))=\Phi_c((p,[\tau]),(x,[t]))$.
  
  \end{proof}
  
  \bigskip

  \begin{lemma}\label{minwk}\quad
  
  If $\cU\subset\cS^-$, let  $\fu(x,[t]):=\inf_{u\in\cU}u(x,[t])$ then
  either $\fu\equiv-\infty$ or $\fu\in\cS^-$.
  
  If $\cV\subset\cS^+$, let  $\fv(x,[t]):=\sup_{v\in\cV}v(x,[t])$ then
  either $\fv\equiv+\infty$ or $\fv\in\cS^+$.
  \end{lemma}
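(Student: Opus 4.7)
I would treat the $\cS^-$ statement; the $\cS^+$ case is symmetric. The plan is three-fold: first show that $\fu$ inherits domination (and hence is either identically $-\infty$ or finite everywhere); next construct a candidate realizing curve at a prescribed base point by a compactness argument; and finally verify the realizing identity, using the \emph{uniform} Lipschitz constant supplied by Lemma~\ref{wk-lip}.

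For domination, each $u \in \cU$ obeys $u(y,[t]) - u(x,[s]) \le \Phi_c((x,[s]),(y,[t]))$, so taking the infimum over $u$ on both sides yields the analogous inequality for $\fu$ wherever both values are finite. Since $\Phi_c$ is finite-valued by Proposition~\ref{basic}.3, a single value $\fu=-\infty$ forces $\fu \equiv -\infty$; otherwise $\fu$ is finite and dominated, and Lemma~\ref{wk-lip} provides a common Lipschitz constant $K$ for all of $\cU$, making $\fu$ itself $K$-Lipschitz.

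Assuming $\fu$ is finite, fix $(x,[s])$, choose $u_n \in \cU$ with $u_n(x,[s]) \to \fu(x,[s])$, and let $\ga_n$ be a curve realizing $u_n$ with $\ga_n(s)=x$. The segment $\ga_n|_{[s-1,s]}$ is an $(L+c)$-minimizer: any competitor of strictly smaller action with the same endpoints, combined with the realizing equality for $u_n$, would contradict domination of $u_n$. Lemma~\ref{le-mather} then gives $|\dot\ga_n(s)| \le A$. Passing to a subsequence with $\dot\ga_n(s) \to v$ and invoking continuous dependence on initial conditions, the Euler--Lagrange solution $\ga$ with $\ga(s)=x$, $\dot\ga(s)=v$ is the $C^1$-limit of $\ga_n$ on every compact subinterval of $(-\infty,s]$.

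To see that $\ga$ realizes $\fu$, fix $t<s$ and pass to the limit in $u_n(x,[s]) - u_n(\ga_n(t),[t]) = A_{L+c}(\ga_n|_{[t,s]})$. The right-hand side tends to $A_{L+c}(\ga|_{[t,s]})$, and the uniform Lipschitz bound gives $|u_n(\ga_n(t),[t]) - u_n(\ga(t),[t])| \le K\,d(\ga_n(t),\ga(t)) \to 0$; since $u_n(\ga(t),[t]) \ge \fu(\ga(t),[t])$, this yields $\fu(x,[s]) - \fu(\ga(t),[t]) \ge A_{L+c}(\ga|_{[t,s]})$, and the reverse inequality comes from domination of $\fu$ together with $\Phi_c((\ga(t),[t]),(x,[s])) \le A_{L+c}(\ga|_{[t,s]})$. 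The main obstacle is exactly this swap of $u_n(\ga_n(t),[t])$ for $u_n(\ga(t),[t])$: without a common Lipschitz constant across $\cU$, the limit could not be pushed inside the varying functions $u_n$, so the whole argument hinges on the ``independent of $u$'' clause in Lemma~\ref{wk-lip}.
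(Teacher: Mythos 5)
Your proposal is correct and follows essentially the same route as the paper: infimum preserves domination, a minimizing sequence $u_n$ at the base point gives realizing curves whose velocities are bounded by Mather's Lemma~\ref{le-mather}, a convergent subsequence yields a limit Euler--Lagrange solution by continuous dependence, and the uniform Lipschitz constant from Lemma~\ref{wk-lip} lets you pass to the limit in the realizing identity, with domination supplying the reverse inequality. Your added details (propagation of the value $-\infty$ via finiteness of $\Phi_c$, and the observation that realizing segments are minimizers so Lemma~\ref{le-mather} applies) are points the paper leaves implicit.
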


  \begin{proof}
  Since $u\prec L+c$ for all $u\in\cU$, 
   for all $(x,[s]),(y,[t])\in M\times
  {\Bbb S}^1$,
  \begin{align}
  u(y,[t])&\le u(x,[s])+\Phi_c((x,[s]),(y,[t])), &&\text{ for all } u\in\cU,
  \notag \\
  \min_{u\in\cU}u(y,[t])=\fu(y,[t])&\le 
  u(x,[s])+\Phi_c((x,[s]),(y,[t])), &&\text{ for all } u\in\cU,
  \notag \\
  \fu(y,[t])&\le \fu(x,[s])+\Phi_c((x,[s]),(y,[t])).\label{fu-domi}
  \end{align}
  
  Now fix $(x,[t])\in M\times {\Bbb S}^1$ and fix a sequence $u_k\in \cU$
  such that $\fu(x,[t])=\lim_k u_k(x,[t])$. Let $(x,v_k,[t])\in \Ga^-(u_k)$.
  By lemma~\ref{le-mather}, $\lV v_k\rV$ is uniformly bounded.
  We can assume that $v_k\to w$. Let
  $\ga_{v_k}(s):=\pi\,\vr_{s-t}(x,v_k,t)$ and 
  $\ga_{w}(s):=\pi\,\vr_{s-t}(x,w,t)$. Then
  \begin{align*}
  u_k(x,t)=u_k(\ga_{v_k}(s),[s])+A_{L+c}\bigl(\ga_{v_k}|_{[s,t]}\bigr),
  &&\text{ for all }s<t,
  \end{align*}
  Since $\ga_{v_k}\overset{C^1}\longrightarrow \ga_w$ 
  uniformly on bounded intervals,
  using that by lemma~\ref{wk-lip} all the $u_k$'s have the
  same Lipschitz constant,
  taking the lim inf on $k$ we get that
  \begin{align}
  \fu(x,t)\ge \fu(\ga_{w}(s),[s])+A_{L+c}\bigl(\ga_{w}|_{[s,t]}\bigr),
  &&\text{ for all }s<t,\label{fu-realized}
  \end{align}
  The domination condition~\eqref{fu-domi} implies
  that~\eqref{fu-realized} is an equality.
  \end{proof}

 \end{document}